%
%

\documentclass[twoside]{amsart}

\numberwithin{equation}{section}
\usepackage{amsmath,amssymb,amsfonts,amsthm,latexsym}
\usepackage{amscd,graphicx,color,enumerate}
\usepackage[all]{xy}

\newtheorem{theorem}{Theorem}[section]
\newtheorem{lemma}[theorem]{Lemma}
\newtheorem{proposition}[theorem]{Proposition}
\newtheorem{corollary}[theorem]{Corollary}

\theoremstyle{definition}

\theoremstyle{remark}

\numberwithin{equation}{section}

\newcommand{\C}{\mathbb{C}}

\newcommand{\R}{\mathbb{R}}

\newcommand{\Z}{\mathbb{Z}}

\newcommand{\U}{\operatorname{U}}
\newcommand{\SU}{\operatorname{SU}}

\newcommand{\SO}{\operatorname{SO}}
\newcommand{\OO}{\operatorname{O}}

\newcommand{\Span}{\operatorname{Span}}

\newcommand{\co}{\colon\thinspace}
\newcommand{\git}{/\!\!/}

\newcommand{\Hilb}{\operatorname{Hilb}}

\newcommand{\be}{\boldsymbol{e}}
\newcommand{\bp}{\boldsymbol{p}}
\newcommand{\bq}{\boldsymbol{q}}
\newcommand{\bx}{\boldsymbol{x}}
\newcommand{\by}{\boldsymbol{y}}
\newcommand{\bY}{\boldsymbol{Y}}
\newcommand{\bz}{\boldsymbol{z}}
\newcommand{\bw}{\boldsymbol{w}}
\newcommand{\bJ}{\boldsymbol{J}}
\newcommand{\bQ}{\boldsymbol{Q}}

\begin{document}

\title{Symplectic reduction at zero angular momentum}

\author{Joshua Cape}
\address{Department of Applied Mathematics and Statistics,
Johns Hopkins University, 3400 N. Charles St, Baltimore, MD 21218, USA}
\email{joshua.cape@jhu.edu}

\author{Hans-Christian Herbig}
\address{Departamento de Matem\'{a}tica Aplicada,
Av. Athos da Silveira Ramos 149,
Centro de Tecnologia - Bloco C,
CEP: 21941-909 - Rio de Janeiro, Brazil}
\email{herbig@imf.au.dk}

\author{Christopher Seaton}
\address{Department of Mathematics and Computer Science,
Rhodes College, 2000 N. Parkway, Memphis, TN 38112, USA}
\email{seatonc@rhodes.edu}

\keywords{symplectic reduction, moment map, angular momentum, $O_n$-representation, rational singularities}
\subjclass[2010]{Primary 53D20, 13A50; Secondary 57S15, 37J15, 20G20}
\thanks{HCH has been supported by the grant GA CR P201/12/G028 and by the grants
FAPESP 2014/20191-8 and 2014/00250-0.
CS was supported by the E.C. Ellett Professorship in Mathematics.}

\begin{abstract}
We study the symplectic reduction of the phase space describing $k$ particles
in $\R^n$ with total angular momentum zero.  This corresponds to the singular
symplectic quotient associated to the diagonal action of $\OO_n$ on $k$
copies of $\C^n$ at the zero value of the homogeneous quadratic moment map.
We give a description of the ideal of relations of the ring
of regular functions of the symplectic quotient. Using this description, we
demonstrate $\Z^+$-graded regular symplectomorphisms among the $\OO_n$- and
$\SO_n$-symplectic quotients and determine which of these quotients are graded
regularly symplectomorphic to linear symplectic orbifolds. We demonstrate that
when $n \leq k$, the zero fibre of the moment map has rational singularities
and hence is normal and Cohen-Macaulay. We also demonstrate that
for small values of $k$, the ring of regular functions on the symplectic
quotient is graded Gorenstein.
\end{abstract}

\maketitle


\section{Introduction}
\label{sec:Intro}

In this note, we examine geometric and algebraic properties of symplectic quotients
corresponding to $k$ particles moving in $\R^n$ with zero total angular momentum.  Previous studies of
these spaces by M. Gotay et. al. \cite{BosGotayReduc, GotayHomYM, ArmsGotayJennings}
and by J. Huebschmann, see e.g. \cite{HuebschSingPoissonCertainRep}, were motivated
by gauge theory. More specifically \cite[Theorem 4]{HuebschSingPoissonCertainRep},
the cases $n=2,3$ occur as local models for the strata of the moduli space of flat
$\operatorname{SU}_2$-connections on a closed hyperelliptic Riemann surface of genus $g$,
the number of particles $k$ being linked to the genus by $k=g$ or $k=g-1$, respectively.
Moreover, as an illustration to the seminal paper \cite{SjamaarLerman} of E. Lerman and
R. Sjamaar, the symplectic quotients at zero angular momentum have been studied in
\cite{LermanMontgomerySjamaar}, see also the survey \cite{HuebschMemoirs}.

Here, we expand on the discussion in \cite{LermanMontgomerySjamaar} of the symplectomorphism problem
(see Section \ref{sec:Q}) for symplectic quotients at zero angular momentum and explain in more detail
the distinction between reduction by $\operatorname{O}_n$ and by $\operatorname{SO}_n$ (see Section
\ref{sec:OrbitTypes}). Our strategy for constructing maps between symplectic quotients is based on
polynomial invariant theory and emphasizes the role of the (graded) $\R$-algebra of regular functions
$\R[M_0]$ on the symplectic quotient $M_0$. This approach has been already advocated in
\cite{FarHerSea,HerbigSeatonHSeries,HerbigSchwarzSeaton,HerbigSeatonImpos}; the requisite material will
be recalled in Section \ref{sec:Setup}. Essential for the construction of the symplectomorphisms will be
a system of real polynomials $Q_{i,j}$ in the invariants whose locus coincides with that of the moment map.

Moreover, we show in Section \ref{sec:OrbitTypes} that the symplectic quotient is symplectomorphic to an
orbifold precisely when $k=1$ or $n=1$, illustrating that in this case,
the non-orbifold conditions of \cite{HerbigSchwarzSeaton} are necessary but not sufficient;
this is also illustrated in that reference with the case of $\SU_2$-symplectic quotients.
In Section \ref{sec:RatSing}, we show that when $n \leq k$, the zero fibre of the moment map has
\emph{rational singularities}, which in particular implies that the corresponding ideal is
integrally closed and Cohen-Macaulay. Employing Boutot's Theorem \cite{Boutot}
as well as the aforementioned symplectomorphisms, it follows that the symplectic quotients
by $\OO_n$ and $\SO_n$ have rational singularities for each $n$ and $k$, a result which also
follows from the results of \cite{TerpereauThesis} and \cite{BeauvilleSympSing} for the
corresponding complex symplectic quotients. Note that in \cite{SjamaarHoloSlice}, symplectic
quotients are shown to have rational singularities in terms of the
complex analytic structure they inherit from the corresponding GIT quotient via the Kempf-Ness theorem;
see Section \ref{sec:Setup}. The results discussed here, on the other hand, are in terms of the real
structure of the symplectic quotient or the complex analytic structure of the complex symplectic quotient.

In Section \ref{sec:Computations}, the experimental part of the paper, we address the open question
of determining the relations of the intersection of the ring of invariants with the vanishing ideal of the
zero fibre of the moment map, i.e. the real radical of the ideal described by Theorem \ref{thrm:IdealRelations}.
In those cases where we can calculate the generators with the computer
algebra software \emph{Macaulay2} \cite{M2} and \emph{Mathematica} \cite{Mathematica}
(namely, $k\leq 3$ and some cases when $k=4$) we show that there are no additional relations, and
$\R[M_0]$ is actually graded Gorenstein.
We discuss the influence of the choice of a term order on the efficiency of the respective Gr\"{o}bner
basis calculations. That $\R[M_0]$ is graded Gorenstein with rational singularities is expected in
a much wider generality.


\section*{Acknowledgements}
We would like to thank Anurag Singh and Srikanth Iyengar for advice concerning rational
singularities, Maria Aparecida Soares Ruas for guiding us to the right literature on
determinantal varieties, and Gerald Schwarz, from whom we learned the
techniques used in the proof of Theorem \ref{thrm:OrbifoldCriteria}.
HCH has profited from reading an unpublished manuscript of Claudio
Emmrich on the case $k=2$, $n=3$. CS would like to thank the Instituto de Ci\^{e}ncias
Matem\'{a}tica e de Computa\c{c}\~{a}o, Universidade de S\~{a}o Paulo for their hospitality
during the completion of this project. This paper developed from the JC's
senior seminar project in the Rhodes College Department of Mathematics and Computer
Science, and the authors gratefully acknowledge the support of the department and college
for these activities.


\section{Setup and notation}
\label{sec:Setup}

Let $k$ be a positive integer. Throughout this paper, we consider the phase space of
$k$ particles in $\R^n$. We let $V_{k,n} = \R^{2kn}$ denote the phase space
of configurations of the $k$ particles; throughout, we will suppress the indices
$k,n$ when it will cause no confusion. For each $\ell = 1,\ldots,k$, we let
$\bq_\ell = (q_{\ell,1},\ldots,q_{\ell,n}) \in \R^n$ denote the position and
$\bp_\ell = (p_{\ell,1},\ldots,p_{\ell,n}) \in \R^n$ denote the momentum
of the $\ell$th particle; we will use Greek letters to indicate the coordinates in
$\R^n$ and Roman letters for the particle numbering. Then $V$ carries the symplectic
structure associated to it as the underlying real space of the complex vector space
$\C^{kn}$ with coordinates $q_{\ell,\alpha} + \sqrt{-1}p_{\ell,\alpha}$ and the
standard Hermitian structure. We use $(\bq,\bp)$
to abbreviate the coordinates $(\bq_1,\bp_1,\ldots,\bq_k,\bp_k)$ for $V$. For certain
computations, it will be convenient to define $\by_{2\ell-1} = \bq_\ell$ and
$\by_{2\ell} = \bp_\ell$ so that we may use coordinates $\by = (\by_1,\ldots,\by_{2k})$
where each $\by_i\in\R^n$ as well, with $\by_i = (y_{i,1},\ldots,y_{i,n})$.
We will throughout consider the diagonal action of the
real orthogonal group $\OO_n :=\OO_n(\R)$ on $V$ given by the standard $\OO_n$-action on
each $\R^n$-factor. Representing a point $(\bq,\bp)\in V$ as an $n\times 2k$ matrix,
this action corresponds to multiplication on the left by elements of $\OO_n$. For the
constant Poisson bracket $\{\,,\,\}$ on $\R[V_{k,n}]$ we use the convention
$\{q_{\ell_1,\alpha},p_{\ell _2,\beta}\}:=\delta_{\ell_1,\ell_2}\delta_{\alpha,\beta}$
for $i,j=1,\dots,k$ and $\alpha,\beta=1,\dots,n$; all other brackets between linear
coordinates are supposed to vanish.

The action of $\OO_n$ (respectively $\SO_n$) on the real vector space $V$ extends
to an action of the complexification $\OO_n(\C)$ (respectively $\SO_n(\C)$) on
the complex vector space $V\otimes_\R\C$, and hence the real invariant polynomials
$\R[V]^{\OO_n}$ can be computed in terms of the complex invariants
$\C[V\otimes_\R\C]^{\OO_n(\C)}$; see \cite[Proposition 5.8(1)]{GWSliftingHomotopies}.
By the First and Second Fundamental Theorems of Invariant Theory for $\OO_n$, see
\cite[Sections 9.3 and 9.4]{PopovVinberg} and \cite[Sections 9 and 17]{Weyl},
the invariants $\R[V]^{\OO_n}$ are generated by the scalar products
$x_{i,j} := \langle\by_i,\by_j\rangle$, $1\leq i \leq j \leq 2k$. The relations among these
invariants, which we refer to as \emph{off-shell relations}, are generated by the
$(n+1)\times(n+1)$-minors of the symmetric $2k\times 2k$ matrix $X:=(x_{i,j})$; see the
definition of the \emph{shell} below for an explanation of this language. Hence
$\R[V]^{\OO_n}$ is a polynomial algebra if and only if $n \geq 2k$. In the same way,
$\R[V]^{\SO_n}$ is generated by the $x_{i,j}$ as well as determinants
$\det(\by_{i_1},\ldots,\by_{i_n})$ with $1\leq i_1 < \cdots < i_n \leq 2k$.
The invariant rings  $\R[V]^{\OO_n}\subset \R[V]^{\SO_n}$ are by construction
Poisson subalgebras of $\R[V]$. This can also be expressed in terms of commutation
relations among the invariants, e.g. for $\ell_1,\ell_2,\ell_3,\ell_4=1,2,\dots,k$:
\begin{equation}
\label{eq:PoissonBrackets}
\begin{split}
    \{x_{2\ell_1-1,2\ell_2-1},x_{2\ell_3,2\ell_4}\}
    =&  \delta_{\ell_1,\ell_3}x_{2\ell_2-1,2\ell_4} + \delta_{\ell_1,\ell_4}x_{2\ell_2-1,2\ell_3}
        \\
        &+ \delta_{\ell_2,\ell_3}x_{2\ell_1-1,2\ell_4}+\delta_{\ell_2,\ell_4}x_{2\ell_1-1,2\ell_3},
        \\
    \{x_{2\ell_1-1,2\ell_2},x_{2\ell_3-1,2\ell_4}\}
    =&  \delta_{\ell_1,\ell_4}x_{2\ell_3-1,2\ell_2} - \delta_{\ell_2,\ell_3}x_{2\ell_1-1,2\ell_4},
        \\
    \{x_{2\ell_1-1,2\ell_2},x_{2\ell_3-1,2\ell_4-1}\}
    =&  -\delta_{\ell_1,\ell_4}x_{2\ell_1-1,2\ell_3-1} - \delta_{\ell_2,\ell_3}x_{2\ell_1-1,2\ell_4-1},
        \\
    \{x_{2\ell_1-1,2\ell_2},x_{2\ell_3,2\ell_4}\}
    =&  \delta_{\ell_1,\ell_4}x_{2\ell_1-1,2\ell_3}+\delta_{\ell_2,\ell_3}x_{2\ell_1-1,2\ell_4}.
\end{split}
\end{equation}

Identifying the Lie algebra $\mathfrak{o}_n^\ast$ with $\bigwedge^2 \R^n$ in the standard
way, the moment map $\bJ=\bJ_{k,n}\co V \to \mathfrak{o}_n^\ast$ can be expressed as
\[
    \bJ(\bq,\bp)
    =
    \sum\limits_{\ell=1}^k \bq_\ell\wedge\bp_\ell.
\]
For $1\leq \alpha < \beta \leq n$, we let
\begin{equation}
\label{eq:DefJ}
\begin{split}
    J_{\alpha,\beta}(\bq, \bp)
    =&
    J_{k,n,\alpha,\beta}(\bq, \bp)
    =
    \sum\limits_{\ell=1}^k q_{\ell,\alpha} p_{\ell,\beta} - q_{\ell,\beta} p_{\ell,\alpha}
    \\\nonumber =&
    \sum\limits_{\ell=1}^k y_{2\ell-1,\alpha} y_{2\ell,\beta} - y_{2\ell-1,\beta} y_{2\ell,\alpha}
\end{split}
\end{equation}
denote the $\be_\alpha\wedge\be_\beta$-component of
$\bJ(\bq, \bp)$ where the $\be_\alpha$ denote the standard
basis vectors of $\R^n$. We let
$\mathcal{J} = \mathcal{J}_{k,n} = \langle J_{k,n} \mid 1\leq i < j \leq 2k \rangle$
denote the ideal of $\R[V]$ generated by the components of the moment map.  For all
$\alpha,\beta,\gamma,\epsilon=1,2,\dots,n$ the moment map fulfills the commutation relations
\[
    \{J_{\alpha,\beta},J_{\gamma,\epsilon}\}
    =   \delta_{\alpha,\epsilon}J_{\beta,\gamma} + \delta_{\beta,\gamma}J_{\epsilon,\alpha}
        + \delta_{\alpha,\gamma}J_{\beta,\epsilon}+\delta_{\beta,\epsilon}J_{\alpha,\gamma},
\]
which implies that $\{\mathcal{J},\mathcal{J}\}\subset \mathcal{J}$.

Let $Z_{k,n} = Z = \bJ^{-1}(0)$ denote the zero fiber of the moment map, which
we refer to as the \emph{shell}. The phase space of configurations with angular
momentum zero is the \emph{symplectic quotient}
\[
    M_0 = M_{0,k,n} :=  Z/\OO_n,
\]
and is our primary object of study. We regard it as a \emph{Poisson differential
space} with a \emph{global chart} defined in terms of the invariants $x_{i,j}$;
we briefly recall the basics of such spaces and refer the reader to
\cite[Section 4]{FarHerSea} or \cite[Section 2.1]{HerbigSeatonImpos} for more details.
The \emph{Hilbert embedding} $V/\OO_n\to \R^{2k\choose 2}$ with coordinates $x_{i,j}$
restricts to an embedding of $M_0$ into $\R^{2k\choose 2}$, realizing
$M_0$ as a semialgebraic set. We let $\mathcal{I}_Z$ denote the vanishing ideal of
$Z$ in $\R[V]$ and $\mathcal{I}_Z^{\OO_n} := \mathcal{I}_Z\cap\R[V]^{\OO_n}$. Then
the \emph{Poisson algebra of real regular functions on $M_0$} is
\[
    \R[M_0] :=  \R[V]^{\OO_n}/ \mathcal{I}_Z^{\OO_n}.
\]
The algebra $\R[M_0]$ is a Poisson subalgebra of the algebra $\mathcal{C}^\infty(M_0)$
which is defined as the quotient of $\mathcal{C}^\infty(V)^{\OO_n}$ by the ideal of
smooth invariant functions that vanish on $Z$; see \cite{SjamaarLerman}. We refer
to the relations among the $x_{i,j}$ in $\R[M_0]$, which include the off-shell relations
as well as the elements of $\mathcal{I}_Z^{\OO_n}$, as \emph{on-shell relations}.

Given two such Poisson differential spaces with global charts, an
isomorphism between the algebras of regular functions induces a homeomorphism
between the Zariski closures of the associated semialgebraic sets. Letting
$\Z^+$ denote the nonnegative integers, a ($\Z^+$-graded)
isomorphism between the algebras of regular functions is a
\emph{($\Z^+$-graded) regular diffeomorphism} if the corresponding homeomorphism
restricts to a homeomorphism between the semialgebraic sets themselves, and a
\emph{($\Z^+$-graded) regular symplectomorphism} if the isomorphism on algebras of
regular functions is Poisson. By \cite[Theorem 6]{FarHerSea}, a $\Z^+$-graded regular
symplectomorphism induces an isomorphism of Poisson differential spaces and in
particular a Poisson isomorphism between the algebras of smooth functions.

The infinitesimal actions, and hence moment maps, of the $\OO_n$- and
$\SO_n$-actions on $V$ coincide. Hence the symplectic quotient of the Hamiltonian
$\SO_n$-space $V$ is given by
\[
    M_0^{\SO} = M_{0,k,n}^{\SO} :=  Z/\SO_n
\]
with Poisson algebra of real regular functions
\[
    \R[M_0^{\SO}] := \R[V]^{\SO_n}/\mathcal{I}_Z^{\SO_n}.
\]

In addition, we may consider $V$ as the underlying real space of the complex
vector space $\C^{kn}$ with coordinates $q_{\ell,\alpha} + \sqrt{-1}p_{\ell,\alpha}$ as above.
Then $Z$ is the \emph{Kempf-Ness set} associated to the action of $\OO_n(\C)$
(or $\SO_n(\C)$) on $\C^{kn}$. It follows that the symplectic quotient $M_{0,k,n}$
(respectively $M_{0,k,n}^{\SO}$) is homeomorphic via the Kempf-Ness homeomorphism
to the affine GIT quotient $\C^{kn}\git\OO_n(\C) = \operatorname{Spec}(\C[\C^{kn}]^{\OO_n(\C)})$
(respectively $\C^{kn}\git\SO_n(\C) = \operatorname{Spec}(\C[\C^{kn}]^{\SO_n(\C)})$),
see \cite{KempfNess,GWSkempfNess}. Here, the action is again the diagonal of the standard
action on the $k$ $\C^n$-factors. The $\OO_n(\C)$-invariants are generated by the Euclidean inner
products of the vectors in $\C^n$, and the $\SO_n(\C)$-invariants are given by these and the
determinants of $n$ distinct vectors in $\C^n$ by the Fundamental Theorem of Invariant Theory
described above.

We recall the following and refer the reader to \cite{GWSlifting,HerbigSchwarz} for more details.
Let $G$ be a reductive group and $Y$ an affine $G$-variety over $\C$. Each point
in the GIT quotient $Y\git G$ corresponds to a closed $G$-orbit in $Y$, and $Y\git G$ is
stratified by the orbit types of the closed orbits. There is a unique open orbit type
in $Y\git G$, the \emph{principal orbit type}, and elements of the corresponding conjugacy class
of isotropy groups are the \emph{principal isotropy groups}. We say $Y$ has \emph{finite principal
isotropy groups (FPIG)} if the principal isotropy groups are finite, and that $Y$ is
\emph{$m$-principal} if it has FPIG and the set of principal orbits has (complex) codimension at least $m$.
Letting $Y_{(j)}$ denote the set of points in $Y$ with isotropy group of complex dimension $j$
and $c_j:= \dim_{\C} V_{(j)} - \dim_{\C} G + j$, we say $Y$ is \emph{$m$-modular} if $V_{(0)}\neq\emptyset$
and $c_j + m \leq c_0$. Finally, $Y$ is \emph{$m$-large} if it is $m$-principal and $m$-modular.

In the case that $G = \OO_n$ or $\SO_n$ and $Y = k\C^{n}$ as above,
\cite[Theorem 11.18]{GWSlifting} and \cite[3.5]{HerbigSchwarz} give criteria for $Y$ to be
$1$- or $2$-large. Moreover, \cite[Corollary 4.3]{HerbigSchwarz} implies that in the $1$-large
cases, the vanishing ideal of the (real) shell $Z_{k,n}$ in $\R[V]$ is generated by the components
of the moment map. Similarly, letting $\bJ^\C$ denote the complexification of the moment map,
\cite[Theorem 2.2]{HerbigSchwarz} describes consequences for the scheme associated to the ideal
$\mathcal{J}^\C$ generated by the components of $\bJ^\C$. We summarize these results with the following.

\begin{theorem}[\cite{GWSlifting,HerbigSchwarz}]
\label{thrm:LargeCases}
Let $\C^{kn} = k\C^n$ be equipped with the diagonal of the standard action of $\OO_n(\C)$.
\begin{itemize}
\item[({\it i.})]
        If $n\leq k+1$, then $\C^{kn}$ is $1$-large as an $\OO_n(\C)$- and $\SO_n(\C)$-module.
        Hence the ideal $\mathcal{J}^\C$ of $\C[V\otimes_{\R}\C]$ is a reduced, irreducible complete
        intersection, and the ideal $\mathcal{J}$ of $\R[V]$ is real radical.
\item[({\it ii.})]
        If $n \leq k$, then $\C^{kn}$ is $2$-large as an $\SO_n(\C)$-module.
        Hence $\C[\C^{kn}]/\mathcal{J}^\C$ is integrally closed, i.e. the corresponding
        variety is normal.
\item[({\it iii.})]
        If $n \leq k-1$, then $\C^{kn}$ is $2$-large as an $\OO_n(\C)$-module.
\end{itemize}
\end{theorem}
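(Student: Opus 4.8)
The plan is to treat each item as an $m$-largeness assertion together with its algebraic consequence, establishing largeness via the orbit-type stratification used in \cite{GWSlifting,HerbigSchwarz} and reading the consequences off from the general theory of Koszul complexes of moment maps. First I would pin down the principal isotropy groups of $\C^{kn}=k\C^n$, viewed as the space of $k$-tuples of vectors in $\C^n$: a generic tuple spans a subspace $U$ of dimension $\min\{k,n\}$ that is non-degenerate for the complex bilinear form, and the pointwise stabilizer of such a $U$ is $\OO_{n-\dim U}(\C)$, respectively $\SO_{n-\dim U}(\C)$; hence for $n\le k+1$ the principal isotropy group is finite (trivial, except $\Z/2$ in the $\OO_n$ case when $k=n-1$), so $\C^{kn}$ has FPIG and $V_{(0)}\neq\emptyset$. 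To control the remaining strata I would filter $\C^{kn}$ by the closed subsets $W_d=\{(\bz_1,\dots,\bz_k):\dim_\C\Span(\bz_1,\dots,\bz_k)\le d\}$, for which $\dim_\C W_d=d(n-d)+kd$, so that $\codim_{\C^{kn}}W_{n-d}=d(k-n+d)$. On its dense open part, $W_{n-d}$ parametrizes tuples whose span is non-degenerate of dimension exactly $n-d$, with isotropy $\OO_d(\C)$ (resp.\ $\SO_d(\C)$) of dimension $\binom d2$; the degenerate-span loci inside $W_{n-d}$ are of strictly smaller dimension.

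With these dimensions the largeness conditions become explicit. For the stratum with isotropy of dimension $\binom d2$, $d\ge2$, the quantity $c_j=\dim_\C V_{(j)}-\dim_\C G+j$ with $j=\binom d2$, $\dim_\C V_{(j)}=(n-d)(k+d)$ and $\dim_\C G=\binom n2$ gives an $m$-modular inequality $c_j+m\le c_0$ that simplifies to $d(n-k)\le\binom{d+1}2-m$, whose strongest instance is $d=2$, reading $2(n-k)\le 3-m$, i.e.\ $n\le k+1$ for $m=1$ and $n\le k$ for $m=2$. The stratum $d=1$ behaves differently. For $\SO_n$ it has trivial isotropy ($\SO_1(\C)=\{1\}$) and is absorbed into the principal stratum; the non-principal locus then begins at $W_{n-2}$, of codimension $2(k-n+2)$, which is $\ge m$ precisely for $n\le k+1$, so the binding bound for $\SO_n$ is the $d=2$ modular one, yielding $n\le k+1$ in (i) and $n\le k$ in (ii). For $\OO_n$ the stratum $d=1$ has isotropy $\OO_1(\C)=\Z/2$: when $k\ge n$ it is non-principal of codimension $\codim_{\C^{kn}}W_{n-1}=k-n+1$, and when $k=n-1$ it is itself the principal stratum, the relevant codimension then being that of the degenerate-hyperplane locus, which equals $1$; hence $\OO_n$ is $m$-principal exactly for $n\le k+1$ when $m=1$ and $n\le k-1$ when $m=2$, and since this is the stronger of the two conditions for $\OO_n$, we obtain (i) and (iii). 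All of this is equally contained in the tables of \cite[Theorem 11.18]{GWSlifting} and \cite[3.5]{HerbigSchwarz}.

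For the consequences, once $\C^{kn}$ is $1$-large, \cite[Theorem 2.2]{HerbigSchwarz} gives that the complexified moment map $\bJ^\C$ on $V\otimes_\R\C$ is flat and that the Koszul complex on its $\binom n2$ components resolves $\C[V\otimes_\R\C]/\mathcal J^\C$; hence $\mathcal J^\C$ is a complete intersection of codimension $\binom n2$, and its zero fibre is reduced and irreducible. Real radicality of $\mathcal J\subset\R[V]$ is then \cite[Corollary 4.3]{HerbigSchwarz}: the complexified ideal being radical together with $Z$ carrying a Zariski-dense set of nonsingular real points forces $\mathcal J=\mathcal I_Z$. For (ii), $2$-largeness additionally yields Serre's condition $R_1$ for the zero fibre, and since a complete intersection is Cohen--Macaulay (hence satisfies $S_2$), Serre's criterion shows $\C[V\otimes_\R\C]/\mathcal J^\C$ is integrally closed, i.e.\ normal. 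The one genuine obstacle I anticipate is the bookkeeping in these inequalities --- getting the isotropy-group dimensions exactly right in both the $\OO_n$ and $\SO_n$ cases, handling the borderline $k=n-1$ configurations where the reflection stratum becomes principal, and checking that the degenerate-span strata never provide the codimension bottleneck --- after which the appeals to \cite{HerbigSchwarz} for the Koszul resolution and the normality conclusion are routine.
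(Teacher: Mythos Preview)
The paper does not give a proof of this theorem at all: it is stated as a summary of results from \cite{GWSlifting} and \cite{HerbigSchwarz}, with the text preceding it indicating that the largeness classifications come from \cite[Theorem 11.18]{GWSlifting} and \cite[3.5]{HerbigSchwarz}, the consequences for $\mathcal{J}^\C$ from \cite[Theorem 2.2]{HerbigSchwarz}, and real radicality from \cite[Corollary 4.3]{HerbigSchwarz}. So there is nothing to compare your argument against beyond the citation itself.

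Your proposal goes further than the paper by sketching the actual computation behind the cited results: stratifying $k\C^n$ by the dimension of the span, computing $\dim W_{n-d}=(n-d)(k+d)$ and the isotropy $\OO_d(\C)$ on the non-degenerate open part, and reducing the $m$-modular and $m$-principal inequalities to $d(n-k)\le\binom{d+1}{2}-m$ with the $d=2$ case binding for modularity and the $d=1$ reflection stratum binding for $\OO_n$-principality. This is exactly the analysis carried out in the references, and you correctly identify the borderline $k=n-1$ case where the $\Z/2$ stratum becomes principal for $\OO_n$. Your invocation of \cite[Theorem 2.2]{HerbigSchwarz} for the complete-intersection and normality consequences, and of \cite[Corollary 4.3]{HerbigSchwarz} for real radicality, matches the paper's own citations. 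The one place where your sketch is looser than the cited proofs is in asserting that the degenerate-span loci are always of strictly smaller dimension and never supply the binding codimension; this is true, but a full proof has to track the isotropy groups on those loci (which are not orthogonal groups but contain unipotent pieces) and check that they do not create lower-codimension strata with positive-dimensional isotropy. That bookkeeping is precisely what the tables in \cite{GWSlifting} encode.
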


In particular, when $k = n$, the $\OO_n(\C)$-representation $\C^{kn}$
is not $2$-large, yet the $\SO_n(\C)$-representation is. Because
$\mathcal{J}^\C$ coincides for these two representations, the corresponding
variety is normal, illustrating that the converse of \cite[Theorem 2.2(4)]{HerbigSchwarz}
is false.


\section{The quadratic on-shell relations}
\label{sec:Q}

In this section, we consider the ideal of relations defining $\R[M_0]$ as an affine algebra.
Our goal is to express these relations in terms of
the invariants $x_{i,j}$, $1\leq i \leq j \leq 2k$.

Fix positive integers $n$ and $k$. For $1\leq i < j \leq 2k$, define
\begin{equation}
\label{eq:QDef}
    Q_{k,n,i,j} = \sum\limits_{\ell=1}^k \det
        \begin{pmatrix} x_{i, 2\ell-1} & x_{i, 2\ell} \\
                        x_{j, 2\ell-1} & x_{j, 2\ell} \end{pmatrix}
        = \sum\limits_{\ell=1}^k \det
            \begin{pmatrix} \langle\by_i,\by_{2\ell-1}\rangle & \langle\by_i,\by_{2\ell}\rangle \\
            \langle\by_j,\by_{2\ell-1}\rangle & \langle\by_j,\by_{2\ell}\rangle \end{pmatrix}.
\end{equation}

We consider the $Q_{k,n,i,j}$ as quadratic polynomials in the invariants $x_{i,j}$ or equivalently
as quartic polynomials in the $y_{i,j}$. Note that when $n = 1$, the $Q_{k,n,i,j}$ are forced to
vanish as the $2\times 2$-minors of a Gram matrix must vanish. We will first prove the following.

\begin{proposition}
\label{prop:JiffQ}
Let $k \geq 1$ and $n \geq 1$.  For a point $\by \in V$, the following
are equivalent:
\begin{itemize}
\item[({\it i.})]   $Q_{k,n,2\ell-1,2\ell}(\by) = 0$ for each $1\leq \ell \leq k$,
\item[({\it ii.})]  $Q_{k,n,i,j}(\by) = 0$ for each $1 \leq i < j \leq 2k$, and
\item[({\it iii.})] $\bJ(\by) = 0$, i.e. $J_{k,n,\alpha,\beta}(\by) = 0$
                    for each $1\leq \alpha < \beta \leq n$.
\end{itemize}
\end{proposition}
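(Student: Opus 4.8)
The plan is to prove the chain of implications $(ii.) \Rightarrow (i.)$, $(i.) \Rightarrow (iii.)$, and $(iii.) \Rightarrow (ii.)$, since the first is trivial and the other two can be handled by direct computation. The implication $(ii.) \Rightarrow (i.)$ is immediate, as $(i.)$ is simply the special case $i = 2\ell-1$, $j = 2\ell$ of $(ii.)$.

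For the implication $(i.) \Rightarrow (iii.)$, I would compute $Q_{k,n,2\ell-1,2\ell}$ explicitly in terms of the $y$-coordinates. Using $x_{i,j} = \langle\by_i,\by_j\rangle$, the determinant in the summand of $Q_{k,n,2m-1,2m}$ expands to $\langle\by_{2\ell-1},\by_{2m-1}\rangle\langle\by_{2\ell},\by_{2m}\rangle - \langle\by_{2\ell-1},\by_{2m}\rangle\langle\by_{2\ell},\by_{2m-1}\rangle$, which by the Binet--Cauchy identity equals the $\R^n$-inner product of the bivectors $\by_{2\ell-1}\wedge\by_{2\ell}$ and $\by_{2m-1}\wedge\by_{2m}$ under the induced inner product on $\bigwedge^2\R^n$. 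Summing over $m$ from $1$ to $k$ and recalling that $\bJ(\by) = \sum_{m=1}^k \by_{2m-1}\wedge\by_{2m} = \sum_{m=1}^k \bq_m\wedge\bp_m$, one sees that $Q_{k,n,2\ell-1,2\ell}(\by) = \langle \by_{2\ell-1}\wedge\by_{2\ell},\, \bJ(\by)\rangle$. Hence if $(i.)$ holds, then $\bJ(\by)$ is orthogonal in $\bigwedge^2\R^n$ to every bivector $\by_{2\ell-1}\wedge\by_{2\ell}$, and therefore to their sum, which is $\bJ(\by)$ itself; thus $\langle\bJ(\by),\bJ(\by)\rangle = 0$, forcing $\bJ(\by) = 0$.

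For the implication $(iii.) \Rightarrow (ii.)$, the same Binet--Cauchy computation shows that for arbitrary $1 \le i < j \le 2k$,
\[
    Q_{k,n,i,j}(\by) = \big\langle \by_i\wedge\by_j,\ \textstyle\sum_{m=1}^k \by_{2m-1}\wedge\by_{2m}\big\rangle = \langle \by_i\wedge\by_j,\ \bJ(\by)\rangle,
\]
which vanishes whenever $\bJ(\by) = 0$. This closes the cycle. The main obstacle, such as it is, is purely bookkeeping: one must carefully expand the $2\times 2$ determinants of inner products and recognize the resulting bilinear expression as the natural pairing on $\bigwedge^2\R^n$ (equivalently, verify the identity $\langle\bu_1,\bv_1\rangle\langle\bu_2,\bv_2\rangle - \langle\bu_1,\bv_2\rangle\langle\bu_2,\bv_1\rangle = \langle\bu_1\wedge\bu_2,\bv_1\wedge\bv_2\rangle$). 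Once the key identity $Q_{k,n,i,j}(\by) = \langle\by_i\wedge\by_j, \bJ(\by)\rangle$ is in hand, all three implications follow, with $(i.)\Rightarrow(iii.)$ relying on the slightly less obvious observation that the $k$ bivectors appearing in $(i.)$ already span enough of $\bigwedge^2\R^n$ to detect their own sum.
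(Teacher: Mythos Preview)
Your proof is correct, and in fact cleaner than the paper's. The single identity
\[
    Q_{k,n,i,j}(\by) \;=\; \langle \by_i \wedge \by_j,\ \bJ(\by)\rangle
    \;=\; \sum_{1\le \alpha<\beta\le n}
        (y_{i,\alpha}y_{j,\beta} - y_{i,\beta}y_{j,\alpha})\, J_{\alpha,\beta}(\by),
\]
obtained via Binet--Cauchy, immediately yields all three implications; in particular it shows that each $Q_{i,j}$ lies in the ideal $\mathcal{J}$ generated by the components of $\bJ$, not merely in its real radical.

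The paper reaches the same conclusions by a more piecemeal route. For $(i.)\Rightarrow(iii.)$ it proves the summed version of your identity, $\sum_{\ell} Q_{2\ell-1,2\ell} = \sum_{\alpha<\beta} J_{\alpha,\beta}^2 = \|\bJ\|^2$, as a stand-alone lemma. For $(iii.)\Rightarrow(ii.)$ it argues by induction on $n$: setting $D_{k,n,i,j} := Q_{k,n,i,j} - Q_{k,n-1,i,j}$, it shows $D_{k,n,i,j} = \sum_{\alpha=1}^{n-1}(y_{i,\alpha}y_{j,n}-y_{i,n}y_{j,\alpha})J_{\alpha,n}$, so that $\bJ(\by)=0$ forces each $Q_{k,n,i,j}(\by)$ to equal $Q_{k,n-1,i,j}(\by)$, and one peels off one coordinate of $\R^n$ at a time. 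Your closed-form identity collapses this induction into a single line and yields the ideal membership statement directly; the paper's decomposition, by contrast, makes the layer-by-layer structure in $n$ explicit but is otherwise a longer path to a weaker intermediate conclusion.
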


We begin with the following.

\begin{lemma}
For each $k, n\geq 1$,
\label{lem:sumQtoJ^2}
\begin{equation}
\label{eq:sumQtoJ^2}
    \sum\limits_{\ell=1}^k Q_{k,n,2\ell-1,2\ell}
    =
    \sum\limits_{1\leq \alpha < \beta \leq n}
    J_{k,n,\alpha,\beta}^2.
\end{equation}
\end{lemma}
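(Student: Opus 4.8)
The plan is to verify \eqref{eq:sumQtoJ^2} by expanding both sides as polynomials in the coordinates $y_{i,\alpha}$ (equivalently the $q_{\ell,\alpha}$ and $p_{\ell,\alpha}$) and matching monomials. The bookkeeping is cleanest if organized around the $n\times n$ matrix
\[
    A(\bq,\bp) = \sum_{\ell=1}^k\bigl(\bq_\ell\bp_\ell^{\mathsf T} - \bp_\ell\bq_\ell^{\mathsf T}\bigr),
\]
which is skew-symmetric and whose $(\alpha,\beta)$-entry is exactly $J_{k,n,\alpha,\beta}$ for $\alpha\neq\beta$ by \eqref{eq:DefJ}. Since $A$ has zero diagonal and $A^{\mathsf T}=-A$, we get
\[
    \sum_{1\leq\alpha<\beta\leq n} J_{k,n,\alpha,\beta}^2
    = \tfrac12\sum_{\alpha,\beta=1}^n A_{\alpha,\beta}^2
    = \tfrac12\operatorname{tr}\!\bigl(AA^{\mathsf T}\bigr)
    = -\tfrac12\operatorname{tr}\!\bigl(A^2\bigr).
\]

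Next I would expand $\operatorname{tr}(AA^{\mathsf T})$ by bilinearity in the two summation indices, writing $A=\sum_m(\bq_m\bp_m^{\mathsf T}-\bp_m\bq_m^{\mathsf T})$ and $A^{\mathsf T}=\sum_\ell(\bp_\ell\bq_\ell^{\mathsf T}-\bq_\ell\bp_\ell^{\mathsf T})$. Using the elementary identity $\operatorname{tr}(\bu\bv^{\mathsf T}\bw\bz^{\mathsf T})=\langle\bv,\bw\rangle\langle\bu,\bz\rangle$, each of the four cross terms collapses to a product of two inner products, and after collecting, the two surviving types of term each appear twice, giving
\[
    \tfrac12\operatorname{tr}\!\bigl(AA^{\mathsf T}\bigr)
    = \sum_{m,\ell=1}^k\bigl(\langle\bq_m,\bq_\ell\rangle\langle\bp_m,\bp_\ell\rangle - \langle\bq_m,\bp_\ell\rangle\langle\bp_m,\bq_\ell\rangle\bigr).
\]
On the other side, substituting $\by_{2\ell-1}=\bq_\ell$ and $\by_{2\ell}=\bp_\ell$ into \eqref{eq:QDef} with $(i,j)=(2m-1,2m)$ expresses the $2\times2$ determinant as $\langle\bq_m,\bq_\ell\rangle\langle\bp_m,\bp_\ell\rangle - \langle\bq_m,\bp_\ell\rangle\langle\bp_m,\bq_\ell\rangle$, so $\sum_{m=1}^k Q_{k,n,2m-1,2m}$ is precisely the right-hand side of the last display, and comparing finishes the argument.

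If one prefers to avoid matrices, the same computation runs coordinatewise: expand $J_{k,n,\alpha,\beta}^2$ directly from \eqref{eq:DefJ}, pass from the sum over $1\leq\alpha<\beta\leq n$ to $\tfrac12\sum_{\alpha,\beta=1}^n$ (the diagonal terms vanish since $J_{k,n,\alpha,\alpha}=0$), interchange the order of summation so that the $\alpha$ and $\beta$ sums are carried out first for fixed particle indices, and recognize each inner sum as an inner product $\langle\by_i,\by_j\rangle$. I do not expect a genuine obstacle here: the only points that require care are the factor $\tfrac12$ arising from the ordered-versus-unordered index convention, and keeping the asymmetric term $\langle\bq_m,\bp_\ell\rangle\langle\bp_m,\bq_\ell\rangle$ straight when relabeling $m\leftrightarrow\ell$ inside the double sum.
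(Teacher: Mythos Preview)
Your argument is correct. Both your matrix computation and the coordinatewise alternative you sketch arrive at the same intermediate expression
\[
    \sum_{m,\ell=1}^k\bigl(\langle\bq_m,\bq_\ell\rangle\langle\bp_m,\bp_\ell\rangle - \langle\bq_m,\bp_\ell\rangle\langle\bp_m,\bq_\ell\rangle\bigr),
\]
which is exactly where the paper's proof lands after writing out $\sum_\ell Q_{k,n,2\ell-1,2\ell}$ and before ``expanding and canceling'' to reach $\sum_{\alpha<\beta}J_{\alpha,\beta}^2$.

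The only real difference is organizational: the paper expands both sides in the scalar coordinates $q_{\ell,\alpha},p_{\ell,\beta}$ and matches terms directly, whereas you package the $\R^n$-index bookkeeping into the skew matrix $A$ and the identity $\operatorname{tr}(\bu\bv^{\mathsf T}\bw\bz^{\mathsf T})=\langle\bv,\bw\rangle\langle\bu,\bz\rangle$. Your route makes the factor-of-two and the symmetry in $(\alpha,\beta)$ transparent without writing out the quadruple sum, at the small cost of introducing $A$; the paper's route is more hands-on but uses nothing beyond the definitions. Your coordinatewise alternative is essentially the paper's proof verbatim.
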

Note that the right-hand side of Equation \eqref{eq:sumDtoJ^2} is the norm squared of the moment map,
which plays a role in the study of convexity properties of $\bJ$ \cite{KirwanConvexIII}.
\begin{proof}
When $n = 1$, the left side of Equation \eqref{eq:sumDtoJ^2} vanishes as noted above, and the right
side is the empty sum, so assume $n > 1$. We compute
\[
    \sum_{\ell=1}^k Q_{k,n,2\ell-1,2\ell}
    =
    \sum\limits_{\ell_1=1}^k \sum\limits_{\ell_2=1}^k
                \langle\bq_{\ell_1}, \bq_{\ell_2}\rangle
                \langle\bp_{\ell_1}, \bp_{\ell_2}\rangle
                -
                \langle\bq_{\ell_1}, \bp_{\ell_2}\rangle
                \langle\bp_{\ell_1}, \bq_{\ell_2}\rangle,
\]
which, after expanding and canceling, yields
\[
\begin{split}
    =&
    \sum\limits_{1 \leq\alpha < \beta\leq n} \quad
    \sum\limits_{\ell=1}^k \big( q_{\ell,\alpha}p_{\ell,\beta} - q_{\ell,\beta}p_{\ell,\alpha} \big)
    \sum\limits_{\ell=1}^k \big( q_{\ell,\alpha}p_{\ell,\beta} - q_{\ell,\beta}p_{\ell,\alpha} \big)
    \\=&
    \sum\limits_{1 \leq\alpha < \beta\leq n} \quad J_{k,n,\alpha,\beta}^2.
    \qedhere
\end{split}
\]
\end{proof}

Now, for $k \geq 1$ and $n \geq 2$, we define for each $1\leq i<j \leq 2k$ the difference
\[
    D_{k,n,i,j} = Q_{k,n,i,j} - Q_{k,n-1,i,j}.
\]
Note that $D_{k,2,i,j} = Q_{k,2,i,j}$ due to the vanishing of $Q_{k,1,i,j}$.

\begin{lemma}
\label{lem:DtoSumJ}
For $n\geq 2$, $k \geq 1$, and $1 \leq i < j \leq 2k$, we have
\begin{equation}
\label{eq:DtoSumJ}
    D_{k,n,i,j}
    =
    \sum\limits_{\alpha=1}^{n-1} (y_{i,\alpha}y_{j,n}-y_{i,n}y_{j,\alpha}) J_{k,n,\alpha,n}.
\end{equation}
\end{lemma}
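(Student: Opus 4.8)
The statement is a purely algebraic identity in $\R[V]$, so I would prove it by a direct expansion of both sides, organized so that the bookkeeping stays manageable. The key observation is that $D_{k,n,i,j} = Q_{k,n,i,j} - Q_{k,n-1,i,j}$ isolates exactly the contribution to $Q_{k,n,i,j}$ coming from the $n$th coordinate direction: every inner product $\langle\by_a,\by_b\rangle$ appearing in $Q_{k,n,i,j}$ is $\sum_{\gamma=1}^n y_{a,\gamma}y_{b,\gamma}$, whereas in $Q_{k,n-1,i,j}$ it is $\sum_{\gamma=1}^{n-1} y_{a,\gamma}y_{b,\gamma}$, so the difference of the two $2\times 2$ determinants picks up only those bilinear terms in which the index $n$ occurs.

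\textbf{Step 1: expand $Q_{k,n,i,j}$ in the $y$-coordinates.} Using $x_{a,b} = \sum_{\gamma} y_{a,\gamma}y_{b,\gamma}$, write
\[
    Q_{k,n,i,j}
    = \sum_{\ell=1}^k \bigl( x_{i,2\ell-1}x_{j,2\ell} - x_{i,2\ell}x_{j,2\ell-1} \bigr)
    = \sum_{\ell=1}^k \sum_{\gamma,\delta=1}^n
        \bigl( y_{i,\gamma}y_{2\ell-1,\gamma}\,y_{j,\delta}y_{2\ell,\delta}
             - y_{i,\gamma}y_{2\ell,\gamma}\,y_{j,\delta}y_{2\ell-1,\delta} \bigr).
\]

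\textbf{Step 2: subtract $Q_{k,n-1,i,j}$.} The difference $D_{k,n,i,j}$ is obtained by restricting the double sum over $\gamma,\delta$ from ranging in $\{1,\ldots,n\}^2$ to the terms where at least one of $\gamma,\delta$ equals $n$; equivalently,
\[
    D_{k,n,i,j}
    = \sum_{\ell=1}^k \sum_{\gamma=1}^n \sum_{\delta=1}^n [\gamma = n \text{ or } \delta = n]
        \bigl( y_{i,\gamma}y_{2\ell-1,\gamma}\,y_{j,\delta}y_{2\ell,\delta}
             - y_{i,\gamma}y_{2\ell,\gamma}\,y_{j,\delta}y_{2\ell-1,\delta} \bigr).
\]
I would split this into three groups: ($\gamma = n$, $\delta < n$), ($\gamma < n$, $\delta = n$), and ($\gamma = \delta = n$). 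In the last group the bracket becomes $y_{i,n}y_{2\ell-1,n}y_{j,n}y_{2\ell,n} - y_{i,n}y_{2\ell,n}y_{j,n}y_{2\ell-1,n} = 0$, so only the two mixed groups survive, each a single sum over $\alpha := $ the index that is $< n$, running $1$ to $n-1$.

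\textbf{Step 3: recognize the moment-map factor.} In the group $\gamma = n$, $\delta = \alpha < n$ the summand is $y_{i,n}y_{2\ell-1,n}y_{j,\alpha}y_{2\ell,\alpha} - y_{i,n}y_{2\ell,n}y_{j,\alpha}y_{2\ell-1,\alpha}$; in the group $\gamma = \alpha < n$, $\delta = n$ it is $y_{i,\alpha}y_{2\ell-1,\alpha}y_{j,n}y_{2\ell,n} - y_{i,\alpha}y_{2\ell,\alpha}y_{j,n}y_{2\ell-1,n}$. Adding the two and factoring out the $\ell$-independent quantity, one collects, for each fixed $\alpha$,
\[
    \bigl( y_{i,\alpha}y_{j,n} - y_{i,n}y_{j,\alpha} \bigr)
        \sum_{\ell=1}^k \bigl( y_{2\ell-1,\alpha}y_{2\ell,n} - y_{2\ell-1,n}y_{2\ell,\alpha} \bigr)
    = \bigl( y_{i,\alpha}y_{j,n} - y_{i,n}y_{j,\alpha} \bigr) J_{k,n,\alpha,n},
\]
using the definition \eqref{eq:DefJ} of $J_{k,n,\alpha,n}$. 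Summing over $\alpha = 1,\ldots,n-1$ gives \eqref{eq:DtoSumJ}.

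\textbf{Main obstacle.} There is no deep difficulty here — the whole content is the combinatorial reorganization in Step 2 and the sign-matching in Step 3. The one place to be careful is that the terms with $\gamma = \delta = n$ must be checked to cancel (so that $D$ really is linear in the $J_{k,n,\alpha,n}$ rather than also containing a spurious $y_{\cdot,n}^2$-contribution), and that the coefficient $y_{i,\alpha}y_{j,n} - y_{i,n}y_{j,\alpha}$ comes out with a uniform sign across both mixed groups after relabeling $\gamma\leftrightarrow\delta$ in one of them; a sign slip there would produce $J_{k,n,n,\alpha} = -J_{k,n,\alpha,n}$ instead. Keeping the antisymmetry conventions straight is the only thing that requires attention.
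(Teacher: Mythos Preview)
Your proof is correct and follows essentially the same route as the paper: expand the inner products in coordinates, observe that the terms with both indices equal to $n$ cancel, and factor the remaining mixed terms into $(y_{i,\alpha}y_{j,n}-y_{i,n}y_{j,\alpha})$ times the $\ell$-sum that defines $J_{k,n,\alpha,n}$. The paper's write-up is slightly more condensed but the organization and the key cancellations are identical.
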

\begin{proof}
For fixed $i$ and $j$, we express $D_{k,n,i,j}$ as
\[
\begin{split}
    \sum\limits_{\ell=1}^k & \left(
        \langle \by_i, \by_{2\ell-1} \rangle \langle \by_j, \by_{2\ell} \rangle
            - \left(\sum\limits_{\alpha=1}^{n-1} y_{i,\alpha}y_{2\ell-1,\alpha}\right)
            \left(\sum\limits_{\beta=1}^{n-1} y_{j,\beta}y_{2\ell,\beta}\right)\right.
    \\
        & - \left. \langle \by_i, \by_{2\ell} \rangle \langle \by_j, \by_{2\ell-1} \rangle
        - \left(\sum\limits_{\alpha=1}^{n-1} y_{i,\alpha}y_{2\ell,\alpha}\right)
            \left(\sum\limits_{\beta=1}^{n-1} y_{j,\beta}y_{2\ell-1,\beta}\right)\right).
\end{split}
\]
Expanding, the terms where both $\R^n$-indices are $n$ cancel, and we continue
\[
\begin{split}
    =&
    \sum\limits_{\ell=1}^k \sum\limits_{\alpha=1}^{n-1} \big(
        y_{i,n}y_{2\ell-1,n} y_{j,\alpha}y_{2\ell,\alpha} + y_{i,\alpha}y_{2\ell-1,\alpha} y_{j,n}y_{2\ell,n}
        \\&\quad\quad\quad\quad\quad\quad
        - y_{i,n}y_{2\ell,n} y_{j,\alpha}y_{2\ell-1,\alpha}
        - y_{i,\alpha}y_{2\ell,\alpha} y_{j,n}y_{2\ell-1,n}\big)
    \\=&
    \sum\limits_{\ell=1}^k \sum\limits_{\alpha=1}^{n-1} (y_{i,\alpha}y_{j,n}-y_{i,n}y_{j,\alpha})
        (y_{2\ell-1,\alpha}y_{2\ell,n} - y_{2\ell-1,n}y_{2\ell,\alpha})
    \\=&
    \sum\limits_{\alpha=1}^{n-1} (y_{i,\alpha}y_{j,n}-y_{i,n}y_{j,\alpha}) J_{k,n,\alpha,n}.
    \qedhere
\end{split}
\]
\end{proof}

In particular, the following is an immediate consequence of Equation \eqref{eq:DtoSumJ}.

\begin{corollary}
\label{cor:sumDtoJ^2}
For $n\geq 2$, $k \geq 1$, we have
\begin{equation}
\label{eq:sumDtoJ^2}
    \sum\limits_{\ell=1}^k D_{k,n,2\ell-1,2\ell}
    =
    \sum\limits_{\alpha=1}^{n-1} J_{k,n,\alpha,n}^2.
\end{equation}
\end{corollary}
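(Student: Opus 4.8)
The plan is to specialize Lemma \ref{lem:DtoSumJ} to the diagonal pairs $(i,j) = (2\ell-1, 2\ell)$ and sum over $\ell$. Concretely, setting $i = 2\ell-1$ and $j = 2\ell$ in Equation \eqref{eq:DtoSumJ} gives
\[
    D_{k,n,2\ell-1,2\ell}
    =
    \sum_{\alpha=1}^{n-1} (y_{2\ell-1,\alpha}y_{2\ell,n} - y_{2\ell-1,n}y_{2\ell,\alpha})\, J_{k,n,\alpha,n},
\]
so that, summing over $\ell = 1, \ldots, k$ and interchanging the two finite sums,
\[
    \sum_{\ell=1}^k D_{k,n,2\ell-1,2\ell}
    =
    \sum_{\alpha=1}^{n-1} J_{k,n,\alpha,n} \left( \sum_{\ell=1}^k \big( y_{2\ell-1,\alpha}y_{2\ell,n} - y_{2\ell-1,n}y_{2\ell,\alpha} \big) \right).
\]

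The next step is simply to recognize the inner sum. Comparing with the definition in Equation \eqref{eq:DefJ}, the parenthesized expression $\sum_{\ell=1}^k ( y_{2\ell-1,\alpha}y_{2\ell,n} - y_{2\ell-1,n}y_{2\ell,\alpha} )$ is precisely $J_{k,n,\alpha,n}$ (note $\alpha < n$ throughout the range of summation, so the index ordering in \eqref{eq:DefJ} is respected). Substituting this back yields $\sum_{\ell=1}^k D_{k,n,2\ell-1,2\ell} = \sum_{\alpha=1}^{n-1} J_{k,n,\alpha,n}^2$, as claimed.

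There is essentially no obstacle here: the argument is a one-line bookkeeping computation once Lemma \ref{lem:DtoSumJ} is in hand, the only points requiring (minimal) care being the legitimacy of swapping the order of the two finite summations and the verification that the inner sum matches the defining formula for $J_{k,n,\alpha,n}$ rather than its negative. One could alternatively derive this corollary by combining Lemma \ref{lem:sumQtoJ^2} applied at levels $n$ and $n-1$ with the definition $D_{k,n,i,j} = Q_{k,n,i,j} - Q_{k,n-1,i,j}$, which telescopes the sums of squares $\sum_{1\le\alpha<\beta\le n} J_{k,n,\alpha,\beta}^2 - \sum_{1\le\alpha<\beta\le n-1} J_{k,n-1,\alpha,\beta}^2$ down to the terms with $\beta = n$; but this requires the (true, and easily checked) compatibility $J_{k,n-1,\alpha,\beta} = J_{k,n,\alpha,\beta}$ for $\alpha,\beta < n$, so the direct route via Equation \eqref{eq:DtoSumJ} is cleaner.
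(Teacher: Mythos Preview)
Your proof is correct and follows exactly the route the paper indicates: the paper simply states that the corollary is an immediate consequence of Equation~\eqref{eq:DtoSumJ}, and you have spelled out that immediate consequence by specializing to $(i,j)=(2\ell-1,2\ell)$, summing over $\ell$, and recognizing the inner sum as $J_{k,n,\alpha,n}$.
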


\begin{proof}[Proof of Proposition \ref{prop:JiffQ}]
Fix $k \geq 1$.  If $n = 1$, then as all $2\times 2$-minors of a Gram matrix of vectors in
$\R^1$ must vanish and $\bJ$ is the zero map, conditions ({\it i.}), ({\it ii.}), and ({\it iii.})
hold trivially for each point in $V$.  Hence, we may assume $n \geq 2$.

The fact that ({\it iii.})$\Rightarrow$({\it ii.}) can be seen by induction on $n$ and
Lemma \ref{lem:DtoSumJ}.  Specifically, the base case $n = 2$ follows from the fact
that if $\bJ(\by) = 0$, then the right side of Equation \eqref{eq:sumDtoJ^2}
vanishes, yielding $Q_{k,2,i,j}(\by) = D_{k,2,i,j}(\by) = 0$ for each $i,j$.
Assuming each $Q_{k,n-1,i,j}(\by) = 0$ for some $n$, Equation \eqref{eq:DtoSumJ}
then implies that each $Q_{k,n,i,j}(\by) = 0$.  The implication
({\it ii.}) $\Rightarrow$ ({\it i.}) is obvious.  Then ({\it i.})$\Rightarrow$({\it iii.})
is an immediate consequence of Lemma \ref{lem:sumQtoJ^2}.
\end{proof}

It follows that the $Q_{i,j}$ are elements of the ideal $\mathcal{I}_Z^{\OO_n}$,
and moreover that $\mathcal{I}_Z^{\OO_n}$ is the real radical of the ideal generated by
the $Q_{i,j}$. In addition, note that when $n < 2k$, the off-shell relations of the $x_{i,j}$
in the quotient $\R[V]^{\OO_n}$ are generated by the $(n+1)\times(n+1)$-minors of $X$,
see Section \ref{sec:Setup}, so these minors must be on-shell relations as
well. When $k \leq n$, a stronger statement is true. By an application of
Cartan's Lemma, if $\bJ(\by) = 0$, then the subspace of $\R^n$ spanned by
$\{\by_1,\ldots,\by_{2k}\}$ has dimension at most $k$; see
\cite[page 23]{LermanMontgomerySjamaar}. Therefore, it must be that the $(k+1)\times(k+1)$-minors
of $X$ vanish in $\R[M_0]$, and hence are contained in the real radical of the ideal
generated by the $Q_{i,j}$.

Our next goal is to demonstrate that in general, the $(k+1) \times (k+1)$-minors of $X$
are in fact in the ideal generated by the $Q_{i,j}$ themselves, and not simply elements
of its real radical. We state and prove the following proposition more generally, as it
holds for an arbitrary matrix that need not be symmetric nor square. Specifically, let
$m$ be a positive integer, and let $x_{i,j}$, $1\leq i \leq m$ and $1\leq j \leq 2k$
denote arbitrary variables. Then we may consider the ideal of
$\R[x_{i,j}\mid 1\leq i \leq m, 1 \leq j \leq 2k]$ generated by the $Q_{i,j}$
for $1\leq i \leq m$ and $1\leq j \leq 2k$ as defined by Equation \eqref{eq:QDef}.
Note that, when $m = 2k$ and the $x_{i,j}$ are the $\OO_n$-invariants as above, this
result is trivial unless $k \leq n$. Specifically, if $n < k$, then the $(n+1)\times(n+1)$-minors,
which are off-shell relations among the $x_{i,j}$, generate the $(k+1)\times(k+1)$-minors.

\begin{proposition}
\label{prop:QsGenMinors}
Let $k, m \geq 1$. The ideal of the polynomial ring $\R[x_{i,j}\mid 1\leq i \leq m, 1\leq j \leq 2k]$
generated by the $Q_{i,j}$ contains the $(k+1)\times(k+1)$-minors of the matrix $X$.
\end{proposition}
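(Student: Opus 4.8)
The plan is to exhibit each $(k+1)\times(k+1)$-minor of $X$ as an explicit polynomial combination of the $Q_{i,j}$. The key structural observation is that $Q_{i,j}$ is, up to sign, the $2\times 2$-minor of the $m\times 2k$ matrix $X$ on rows $\{i,j\}$ \emph{summed in a pairwise fashion down the columns}: writing $X = (\bx_1 \mid \cdots \mid \bx_m)^{\mathsf T}$ in terms of its rows $\bx_i = (x_{i,1},\dots,x_{i,2k})$, and grouping the $2k$ columns into $k$ consecutive pairs, $Q_{i,j}$ is the sum over the $k$ pairs of the $2\times 2$ determinant formed by rows $i,j$ restricted to that pair. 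So the $Q_{i,j}$ are the entries of the skew-symmetric $m\times m$ matrix $X S X^{\mathsf T}$, where $S$ is the block-diagonal $2k\times 2k$ matrix with $k$ copies of $\begin{pmatrix} 0 & 1 \\ -1 & 0\end{pmatrix}$ on the diagonal. This is the identity to set up first, since it recasts the whole problem as: the $(k+1)\times(k+1)$-minors of $X$ lie in the ideal generated by the entries of $X S X^{\mathsf T}$.

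With this reformulation, I would proceed by a Laplace/Cauchy–Binet expansion argument. Fix a $(k+1)$-element row set $I \subseteq \{1,\dots,m\}$ and a $(k+1)$-element column set $K \subseteq \{1,\dots,2k\}$, and consider the submatrix $X_{I,K}$. Since $|K| = k+1 > k$ and $S$ has $k$ diagonal $2\times 2$ blocks, the columns indexed by $K$ must contain a column $c$ whose paired partner $c'$ (the other element of its $2$-block) also lies in $K$ — by pigeonhole, at least one of the $k$ pairs is entirely inside $K$. The strategy is then to expand $\det X_{I,K}$ along the two columns $c, c'$ using the generalized Laplace expansion: $\det X_{I,K}$ is a signed sum over $2$-element row subsets $\{i,j\}\subseteq I$ of $(\text{the }2\times 2\text{ minor of }X\text{ on rows }\{i,j\},\ \text{columns }\{c,c'\}) \cdot (\text{the complementary }(k-1)\times(k-1)\text{ minor})$. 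The $2\times 2$ minors appearing here are exactly the single-pair determinants that sum to give $Q_{i,j}$. The obstruction to concluding immediately is that $Q_{i,j}$ is the \emph{sum} of $k$ such single-pair determinants, not one of them — so I cannot directly read off $\det X_{I,K}$ as a combination of the $Q_{i,j}$.

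To get around this I would argue by induction on $k$ (equivalently, on the number of $2\times 2$ blocks in $S$ and hence on the size of the minor), peeling off one block at a time. Concretely: modulo the ideal generated by all the $Q_{i,j}$, each $Q_{i,j} = \sum_{\ell=1}^{k} M^{(\ell)}_{i,j}$ vanishes, so any one single-pair minor $M^{(\ell)}_{i,j}$ is congruent to $-\sum_{\ell' \neq \ell} M^{(\ell')}_{i,j}$. Using this to substitute inside the Laplace expansion of $\det X_{I,K}$ along the in-$K$ block, and then re-collecting terms, one should land on a relation expressing $\det X_{I,K}$ modulo the ideal as a combination of $(k\times k)$-determinants of submatrices of $X$ that only involve the remaining $k-1$ blocks — at which point the inductive hypothesis (that all $k\times k$ minors of an $m\times 2(k-1)$ matrix lie in the ideal generated by the corresponding $Q$'s) applies after checking that the $Q_{i,j}$ for the size-$(k-1)$ problem are among our $Q_{i,j}$ reduced appropriately. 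The base case $k=1$ is immediate: the $2\times 2$-minors of $X$ on a pair of columns that forms the single $S$-block are the $Q_{i,j}$ themselves, and any $2\times 2$-minor on a general column pair is handled since there is only one block to align with. I expect the bookkeeping of signs and of which minors survive the reduction to be the main technical obstacle; a cleaner route, which I would try first, is to prove directly by linear algebra over the polynomial ring that $X S X^{\mathsf T}$ having rank $\leq r$ forces $\operatorname{rank} X \leq$ something — but since $S$ is invertible, $X S X^{\mathsf T} = 0$ does \emph{not} force low rank of $X$, so the naive rank bound fails and the inductive Laplace-expansion argument seems unavoidable.
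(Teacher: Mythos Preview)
Your structural observation that $Q_{i,j}$ is the $(i,j)$-entry of $X S X^{\mathsf T}$, with $S$ the standard symplectic form on $\R^{2k}$, is correct. However, the inductive plan has a genuine gap beyond bookkeeping. After Laplace-expanding $\det X_{I,K}$ along the full pair $\{c,c'\}$ in block $\ell$ and substituting $M^{(\ell)}_{i,j}\equiv -\sum_{\ell'\neq\ell}M^{(\ell')}_{i,j}$, reverse Laplace reassembles each $\ell'$-summand into $\pm\det X_{I,\,K'\cup\{2\ell'-1,2\ell'\}}$, which is again a $(k+1)\times(k+1)$ minor (not $k\times k$ as you write). More seriously, the step you flag as ``checking that the $Q_{i,j}$ for the size-$(k-1)$ problem are among our $Q_{i,j}$ reduced appropriately'' fails outright: the truncated $Q^{(k-1)}_{i,j}=\sum_{\ell'\neq\ell}M^{(\ell')}_{i,j}=Q_{i,j}-M^{(\ell)}_{i,j}$, and the single-block minor $M^{(\ell)}_{i,j}$ does not lie in $\langle Q_{i,j}\rangle$ (already for $k=2$, $m=2$ this is a degree count). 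So the inductive hypothesis for the $m\times 2(k-1)$ submatrix lands you in the ideal generated by the truncated $Q^{(k-1)}$'s, not the original $Q$'s, and the argument does not close. Iterating the move instead of invoking a smaller-$k$ hypothesis simply reintroduces the deleted block at the next step.

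The paper's route is different and sidesteps this. It works in $\bigwedge R^m$ and lets $P_{m,k}$ be the $\R$-span of the columns $\bx_1,\ldots,\bx_{2k}$, which is $2k$-dimensional since the $x_{i,j}$ are indeterminates. With $\bQ_k=\sum_\ell \bx_{2\ell-1}\wedge\bx_{2\ell}\in\bigwedge^2 P_{m,k}$, the key lemma is that $\omega\mapsto\bQ_k\wedge\omega$ is surjective from $\bigwedge^{k-1}P_{m,k}$ onto $\bigwedge^{k+1}P_{m,k}$ --- essentially the Lefschetz isomorphism for the standard symplectic form on $\R^{2k}$. The paper proves this by induction on $k$, with one case (the target blade contains an unpaired column index) reducing directly to $k-1$, and a second case (the target is a wedge of $(k+1)/2$ full pairs) handled by an explicit combinatorial formula for $\omega$. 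Given the lemma, any column-wedge $\tau=\bx_{j_1}\wedge\cdots\wedge\bx_{j_{k+1}}$ equals $\bQ_k\wedge\omega$, and extracting the $\be_{i_1}\wedge\cdots\wedge\be_{i_{k+1}}$-coefficient of both sides writes the chosen minor as a polynomial combination of the $Q_{i,j}$. The point is that the induction lives entirely in the $\R$-linear exterior algebra of a fixed symplectic space, so no ``truncated $Q$'' ever appears.
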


To prove Proposition \ref{prop:QsGenMinors}, consider an arbitrary
$m\times 2k$ matrix $X$, and let $R = \R[x_{i,j} | 1\leq i \leq m, 1\leq j \leq 2k]$ denote
corresponding polynomial ring. We work in $R^m$, which we consider both as an $R$-module and
an $\R$-vector space, and consider each column of $X$ as an element of $R^m$. In particular, we let
$P_{m,k}$ denote the sub-$\R$-vector space of $R^m$ given by the $\R$-linear span of the $2k$ columns
of $X$.

Let $\bx_i \in R^m$ denote the $i$th column of $X$, and let $\be_i \in R^m$ denote the element
containing $1$ in the $i$th position and $0$ elsewhere as usual. Define
\[
    \bQ_k = \sum\limits_{\ell=1}^k \bx_{2\ell-1}\wedge\bx_{2\ell}.
\]
Note that in the case that $m = 2k$ and $x_{i,j} = \langle\by_i,\by_j\rangle$ as above, we have
\[
    \bQ_k = \sum\limits_{1\leq i < j \leq 2k}
        Q_{i,j} \be_i \wedge \be_j.
\]
We claim the following.

\begin{lemma}
\label{lem:Wedge}
Let $X$ be an $m\times 2k$ matrix with columns $\bx_1\,\ldots,\bx_{2k}$.
For any element $\tau \in \bigwedge^{k+1} P_{m,k}$, there is an element
$\omega \in \bigwedge^{k-1} P_{m,k}$ such that $\bQ_k\wedge \omega = \tau$.
\end{lemma}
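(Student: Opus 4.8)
The plan is to reduce the statement to a concrete linear-algebra fact about the symplectic form
$\bQ_k$ on the span $P_{m,k}$, working inside the exterior algebra of the $R$-module $R^m$. First I
would observe that $\bQ_k = \sum_{\ell=1}^k \bx_{2\ell-1}\wedge\bx_{2\ell}$ is, by construction, a
$2$-form lying in $\bigwedge^2 P_{m,k}$, and that if the $2k$ columns $\bx_1,\ldots,\bx_{2k}$ were
linearly independent over $R$, then $\bQ_k$ would be a nondegenerate symplectic form on the rank-$2k$
free module $P_{m,k}$; in that situation the classical fact that the $(k-1)$st exterior power of a
symplectic form is surjective onto its complementary wedge degree — more precisely, that the Lefschetz
operator $\omega \mapsto \bQ_k \wedge \omega$ from $\bigwedge^{k-1}$ to $\bigwedge^{k+1}$ of a
$2k$-dimensional symplectic space is an isomorphism (the hard Lefschetz property for the trivial case,
or simply a dimension count plus injectivity via the $\mathfrak{sl}_2$-action) — gives exactly the
claim. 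So the conceptual content is: $\bigwedge^{k+1}$ of a $2k$-dimensional symplectic space is hit
by $\bQ_k\wedge(-)$, and the top-degree case $\bigwedge^{2k}$ is spanned by $\bQ_k^{\wedge k}/k!$.

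The subtlety is that the columns of $X$ are \emph{not} assumed independent: $P_{m,k}$ is defined as
their $\R$-linear span, which may have $\R$-dimension anywhere from $0$ to $\min(m,2k)$ (as a subspace
of the $R$-module $R^m$, but the columns are genuine vectors once we remember they live in $R^m$ —
here it is cleanest to treat $P_{m,k}$ as the $\R$-span, i.e. a finite-dimensional $\R$-vector space).
I would therefore argue as follows: pick a maximal $\R$-linearly independent subset of the columns,
say after reindexing $\bx_1,\ldots,\bx_r$ with $r \le 2k$; every other column is an $\R$-linear
combination of these, so $\bigwedge^{k+1} P_{m,k}$ is spanned by wedges of $k+1$ of the chosen
independent columns. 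Now $\bQ_k$, rewritten in terms of a symplectic-type basis of $P_{m,k}$, is a
$2$-form of some rank $2s \le r$; after a linear change of basis (working over $\R$, which is fine)
we may assume $\bQ_k = \sum_{a=1}^{s} \bf_{2a-1}\wedge \bf_{2a}$ where $\bf_1,\ldots,\bf_r$ is a basis
of $P_{m,k}$. The key numerical input is then $s \ge ?$: we need $\bQ_k\wedge(-)$ to reach degree
$k+1$, which requires $2s \ge $ enough. Here is where I expect the real work: one must show that the
rank $2s$ of $\bQ_k$ as a $2$-form on $P_{m,k}$ is large enough — concretely, that
$\bQ_k^{\wedge t} \ne 0$ in $\bigwedge^{2t}P_{m,k}$ for $t$ as large as needed — or, better, bypass
the rank estimate entirely by noting that $\dim_\R P_{m,k} \le 2k$ forces the structure.

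I would handle this by a direct inductive/combinatorial argument rather than invoking hard Lefschetz
in a borderline-degenerate setting. Fix a target $\tau = \bf_{i_0}\wedge\bf_{i_1}\wedge\cdots\wedge
\bf_{i_k}$, a wedge of $k+1$ basis vectors of $P_{m,k}$. Since $\dim_\R P_{m,k}\le 2k$ and $\bQ_k$ is
a sum of $k$ decomposable $2$-forms $\bx_{2\ell-1}\wedge\bx_{2\ell}$, the pigeonhole-style observation
is that among these $k$ summands at least one, say $\bx_{2\ell_0-1}\wedge\bx_{2\ell_0}$, spans a plane
meeting the span of the remaining columns in a controlled way; more usefully, one expands $\bQ_k\wedge
\omega$ for a candidate $\omega$ which is itself a wedge of $k-1$ basis vectors chosen complementary
to the $\bf_{i_j}$'s, and shows by Cramer/Laplace-expansion bookkeeping that the coefficient of $\tau$
can be made nonzero while all other coefficients are killed — this is essentially the statement that
the $(k+1)\times(k+1)$ minor indexed by $\{i_0,\ldots,i_k\}$ equals (up to sign) a coefficient in
$\bQ_k \wedge \omega$, which is precisely the bridge back to Proposition \ref{prop:QsGenMinors}. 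The
cleanest route is probably: prove the free case (columns independent, so $r=2k$, $\bQ_k$ nondegenerate)
by the standard symplectic basis normal form and the identity $\bQ_k\wedge \bigwedge^{k-1} =
\bigwedge^{k+1}$, then deduce the general case by realizing an arbitrary $X$ as a specialization of a
generic $m\times 2k$ matrix with independent columns — since membership in the ideal generated by the
$Q_{i,j}$ is preserved under ring homomorphisms (specialization of the $x_{i,j}$), and Lemma
\ref{lem:Wedge} for generic $X$ descends. The main obstacle, then, is carefully setting up the generic
model and checking that the wedge identity I need is the exact polynomial identity, with $\R$-coefficients,
that specializes correctly; the symplectic normal-form step itself is routine once the genericity
reduction is in place.
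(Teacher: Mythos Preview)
Your core idea is correct and genuinely different from the paper's argument, but you have misread the setup, and that misreading accounts for nearly all of the ``subtlety'' you describe. In the paper's context the entries $x_{i,j}$ of $X$ are \emph{indeterminates}: $R=\R[x_{i,j}]$ and the columns $\bx_i\in R^m$ have distinct indeterminates as coordinates, so they are automatically $\R$-linearly independent (look at any single coordinate). Hence $P_{m,k}$ is always a $2k$-dimensional $\R$-vector space and $\bQ_k=\sum_\ell \bx_{2\ell-1}\wedge\bx_{2\ell}$ is, in the basis $\bx_1,\ldots,\bx_{2k}$, the \emph{standard} symplectic form. There is no degenerate case, no rank estimate to make, and no specialization step needed: your ``free case'' \emph{is} the only case. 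Once you see this, your Lefschetz-type observation---that for a $2k$-dimensional symplectic space $(V,\omega)$ the map $\omega\wedge(-)\colon\bigwedge^{k-1}V\to\bigwedge^{k+1}V$ is an isomorphism (e.g.\ via the $\mathfrak{sl}_2$-triple $(L,\Lambda,H)$ on $\bigwedge^\bullet V$, or a direct dimension/injectivity check)---finishes the lemma in one line.

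By contrast, the paper does not invoke Lefschetz at all: it proves the lemma by induction on $k$, reducing to a monic $(k+1)$-blade $\tau=\bx_{i_1}\wedge\cdots\wedge\bx_{i_{k+1}}$ and splitting into two combinatorial cases according to whether the index set $\{i_1,\ldots,i_{k+1}\}$ breaks one of the symplectic pairs $(2\ell-1,2\ell)$ or is a union of such pairs; in the second case it writes down an explicit $\omega$ as an alternating sum over subsets. Your route is shorter and more conceptual; the paper's route is elementary, self-contained, and constructive (it produces an explicit preimage $\omega$, though that explicitness is not actually used in the application to Proposition~\ref{prop:QsGenMinors}). Either approach is fine; just delete the paragraphs about possible linear dependence, rank $2s$, and specialization, since they address a difficulty that is not present.
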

\begin{proof}
Choose any positive integer $m$. The proof is by induction on $k$. If $k = 1$,
then $\bQ_1 = \bx_1\wedge\bx_2$, and $\bigwedge^{2} P_{m,2}$ is spanned by $\bx_1\wedge\bx_2$.
Therefore, $\tau$ must be a scalar multiple of $\bQ_1$, and setting $\omega$ to be that scalar
yields the result.

Now, fix $k \geq 2$, and assume that for any $s < k$ and any $\tau\in\bigwedge^{s+1} P_{m,s}$, there is a
$\sigma\in\bigwedge^{s-1} P_{m,s}$ such that $\bQ_{s}\wedge\sigma = \tau$. Choose a
$\tau\in\bigwedge^{k} P_{m,s}$. By linearity,
we may assume that $\tau$ is a monic $(k+1)$-blade, i.e.
$\tau = \bx_{i_1}\wedge\bx_{i_2}\wedge\cdots\wedge\bx_{i_{k+1}}$.

Note that $\bQ_k$ is invariant under permuting pairs of indices of the form $(2\ell-1,2\ell)$
for $\ell=1,\ldots,k$, so we may apply such a permutation to $\tau$ for simplicity.
Then there are two cases.

\noindent\textbf{Case 1:} {\it For at least one pair $(2\ell-1,2\ell)$, one element of the pair
appears in $\{i_1,\ldots,i_{k+1}\}$ while the other does not.}
In this case, by permuting pairs, we can assume
$i_k$ and $i_{k+1}$ are not in a pair of the form $(2\ell-1, 2\ell)$.
Let $\bx_j$ denote the other element of the pair containing $\bx_{i_{k+1}}$.
Let $\bQ_{k-1}^{\neg i_{k+1},j}$ denote $\bQ_{k}$ after removing the term
$\pm \bx_{i_{k+1}}\wedge\bx_j$. By the inductive hypothesis, after relabeling the
variables to see that $P_{m,k-1}$ is isomorphic to
$\Span_\R(\{\bx_1,\ldots\bx_{2k}\} \smallsetminus\{\bx_{i_{k+1}},\bx_j \})$,
there is a
$\sigma \in \bigwedge^{k-2} \Span_\R(\{\bx_1,\ldots\bx_{2k}\} \smallsetminus\{\bx_{i_{k+1}},\bx_j \})$
such that
\[
    \bQ_{k-1}^{\neg i_{k+1},j} \wedge\sigma
    = \bx_{i_1}\wedge\cdots\wedge\bx_{i_k}.
\]
Let $\omega = \sigma\wedge \bx_{i_{k+1}}$, and then we compute
\[
    \bQ_k \wedge \omega
    =
    (\bQ_{k-1} \pm \bx_{i_{k+1}}\wedge\bx_j) \wedge \sigma\wedge\bx_{i_{k+1}}
    =
    \tau.
\]
Hence the inductive step is satisfied in this case.

If the $i_1,\ldots,i_{k+1}$ form a union of pairs of the form $(2\ell-1,2\ell)$, then up to
permuting pairs, we may assume the following.

\noindent\textbf{Case 2:} {\it $k$ is odd, and $(i_1,\ldots,i_{k+1}) = (1,2,\ldots,k+1)$.}
In this case, we will describe $\sigma$ explicitly. Let $r = (k+1)/2$ and define
\[
    \sigma    =
    \sum\limits_{t=0}^{r-1}
    \frac{(-1)^t(r-t-1)!t!}{r!} \Big(
        \sum\limits_{\substack{A\subseteq\{1,\ldots,r\}\\ |A|=r-t-1}}
        \;
        \sum\limits_{\substack{B\subseteq\{r+1,\ldots,k\}\\ |B|=t}}
        \;
        \bigwedge\limits_{\ell\in A\cup B}\bx_{2\ell-1}\wedge\bx_{2\ell}
    \Big).
\]
To simplify notation in the following computation, $A$ will always denote subsets of $\{1,\ldots,r\}$
and $B$ will denote subsets of $\{r+1,\ldots,k\}$. We compute $\bQ_k \wedge \sigma$ to be
\[
\begin{split}
    \Big(\sum\limits_{\ell=1}^k & \bx_{2\ell-1}\wedge\bx_{2\ell}\Big)
    \wedge
        \sum\limits_{t=0}^{r-1}
        \frac{(-1)^t(r-t-1)!t!}{r!} \Big(
        \sum\limits_{\substack{|A|=r-t-1 \\ |B|=t}}
        \quad
        \bigwedge\limits_{\ell\in A\cup B}\bx_{2\ell-1}\wedge\bx_{2\ell}
    \Big)
    \\=&
    \Big(\sum\limits_{\ell=1}^r \bx_{2\ell-1}\wedge\bx_{2\ell}\Big)
    \wedge
        \sum\limits_{t=0}^{r-1}
        \frac{(-1)^t(r-t-1)!t!}{r!} \Big(
        \sum\limits_{\substack{|A|=r-t-1 \\ |B|=t}}
        \;
        \bigwedge\limits_{\ell\in A\cup B}\bx_{2\ell-1}\wedge\bx_{2\ell}
    \Big)
    \\+&
    \Big(\sum\limits_{\ell=r+1}^k \bx_{2\ell-1}\wedge\bx_{2\ell}\Big)
        \wedge
        \sum\limits_{t=0}^{r-1}
        \frac{(-1)^t(r-t-1)!t!}{r!} \Big(
        \sum\limits_{\substack{|A|=r-t-1 \\ |B|=t}}
        \;
        \bigwedge\limits_{\ell\in A\cup B}\bx_{2\ell-1}\wedge\bx_{2\ell}
    \Big).
\end{split}
\]
Combining the $\bx_{2\ell-1}\wedge\bx_{2\ell}$-factors
from $\bQ_k$ into the sets $A$ in the first sum (respectively $B$ in the second),
and noting that each resulting set then occurs $r-t$ (respectively $t+1$) times, we continue
\[
\begin{split}
    =&
    \sum\limits_{t=0}^{r-1}
        \frac{(-1)^t(r-t)!t!}{r!} \Big(
        \sum\limits_{\substack{|A|=r-t \\ |B|=t}}
        \quad
        \bigwedge\limits_{\ell\in A\cup B}\bx_{2\ell-1}\wedge\bx_{2\ell}
    \Big)
    \\&
    + \sum\limits_{t=0}^{r-2}
        \frac{(-1)^t(r-t-1)!(t+1)!}{r!} \Big(
        \sum\limits_{\substack{|A|=r-t-1 \\ |B|=t+1}}
        \quad
        \bigwedge\limits_{\ell\in A\cup B}\bx_{2\ell-1}\wedge\bx_{2\ell}
    \Big)
    \\=&
    \bx_1\wedge\cdots\wedge\bx_{k+1}+
    \sum\limits_{t=1}^{r-1}
        \frac{(-1)^t (r-t)! t!}{r!} \Big(
        \sum\limits_{\substack{|A|=r-t \\ |B|=t}}
        \;
        \bigwedge\limits_{\ell\in A\cup B}\bx_{2\ell-1}\wedge\bx_{2\ell}
    \Big)
    \\&
    + \sum\limits_{t=0}^{r-2}
        \frac{(-1)^t (r-t-1)!(t+1)!}{r!} \Big(
        \sum\limits_{\substack{|A|=r-t-1 \\ |B|=t+1}}
        \;
        \bigwedge\limits_{\ell\in A\cup B}\bx_{2\ell-1}\wedge\bx_{2\ell}
    \Big)
    \\=&
    \bx_1\wedge\cdots\wedge\bx_{k+1}+
    \sum\limits_{t=0}^{r-2}
        \frac{(-1)^{t+1}(r-t-1)! (t+1)!}{r!} \Big(
        \sum\limits_{\substack{|A|=r-t-1 \\ |B|=t+1}}
        \;
        \bigwedge\limits_{\ell\in A\cup B}\bx_{2\ell-1}\wedge\bx_{2\ell}
    \Big)
    \\&
    + \sum\limits_{t=0}^{r-2}
        \frac{(-1)^t (r-t-1)!(t+1)!}{r!} \Big(
        \sum\limits_{\substack{|A|=r-t-1 \\ |B|=t+1}}
        \quad
        \bigwedge\limits_{\ell\in A\cup B}\bx_{2\ell-1}\wedge\bx_{2\ell}
    \Big)
    \\=&
    \bx_1\wedge\cdots\wedge\bx_{k+1},
\end{split}
\]
completing the proof.
\end{proof}

\begin{proof}[Proof of Proposition \ref{prop:QsGenMinors}]
Choose any $(k+1)\times(k+1)$-minor of the $m\times 2k$ matrix $X = (x_{i,j})$.
Let $i_1<\cdots<i_{k+1}$ denote the rows and $j_1<\cdots<j_{k+1}$ the corresponding
columns, and let $\tau = \bx_{j_1}\wedge\cdots\wedge\bx_{j_{k+1}}$. Then
there is by Lemma \ref{lem:Wedge} an $\omega$ such that
$\bQ_k\wedge\omega = \tau$. In particular, the components of
this wedge product in terms of the basis elements $\be_{r_1}\wedge\cdots\wedge\be_{r_{k+1}}$
can be expressed algebraically in terms of the components of $\bQ_k$,
and the $\be_{i_1}\wedge\cdots\wedge\be_{i_{k+1}}$-component is
exactly the $(k+1)\times(k+1)$-minor in question.
\end{proof}

Combining Propositions \ref{prop:JiffQ} and \ref{prop:QsGenMinors} with the observations
above, we have proven the following.

\begin{theorem}
\label{thrm:IdealRelations}
Let $k, n \geq 1$. If $n \geq k$, the ideal of on-shell relations among the $x_{i,j}$
describing $\R[M_0]$ as an affine algebra is the real radical of the ideal generated by
the $Q_{i,j}$ for $1 \leq i < j \leq 2k$, and all $(k+1)\times(k+1)$-minors of $X$ are
contained in the ideal generated by the $Q_{i,j}$. If $n < k$, then the ideal of on-shell
relations is generated by both the $Q_{i,j}$ for $1 \leq i < j \leq 2k$ and the
$(n+1)\times(n+1)$-minors of $X$.
\end{theorem}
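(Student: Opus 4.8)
The plan is to assemble Theorem \ref{thrm:IdealRelations} entirely from the pieces already established, so the work is bookkeeping rather than new computation. First I would treat the case $n \geq k$. By Proposition \ref{prop:JiffQ}, the common vanishing locus of the $Q_{i,j}$ (for $1 \leq i < j \leq 2k$) in $V$ coincides exactly with the shell $Z = \bJ^{-1}(0)$; since the $Q_{i,j}$ are expressed in the invariants $x_{i,j}$, this says that the ideal they generate inside $\R[V]^{\OO_n}$ has real radical equal to $\calI_Z^{\OO_n}$, which is precisely the ideal of on-shell relations defining $\R[M_0] = \R[V]^{\OO_n}/\calI_Z^{\OO_n}$ as an affine algebra. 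This gives the first clause. For the second clause of that case, I would invoke Proposition \ref{prop:QsGenMinors} with $m = 2k$ and the $x_{i,j}$ taken to be the $\OO_n$-invariants $\langle \by_i, \by_j\rangle$: it states verbatim that the ideal generated by the $Q_{i,j}$ contains every $(k+1)\times(k+1)$-minor of $X$. (One should note that this is the nontrivial content of Proposition \ref{prop:QsGenMinors} precisely when $n \geq k$, since for $n < k$ the $(k+1)$-minors already lie in the ideal of $(n+1)$-minors.)

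Next I would handle the case $n < k$. Here the subtlety is that $\R[V]^{\OO_n}$ is no longer a polynomial ring: by the Second Fundamental Theorem recalled in Section \ref{sec:Setup}, the off-shell relations among the $x_{i,j}$ are generated by the $(n+1)\times(n+1)$-minors of $X$. So the ideal of on-shell relations, viewed inside the genuine polynomial ring $\R[x_{i,j} \mid 1 \leq i \leq j \leq 2k]$, must contain those $(n+1)$-minors in addition to everything coming from $\calI_Z^{\OO_n}$. The claim to be proved is that these two families — the $Q_{i,j}$ and the $(n+1)\times(n+1)$-minors — already generate the full ideal of on-shell relations, with no real-radical closure needed. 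The inclusion "$\supseteq$" is immediate: the minors are off-shell relations and the $Q_{i,j}$ vanish on $Z$ by Proposition \ref{prop:JiffQ}. For "$\subseteq$", the key input is Theorem \ref{thrm:LargeCases}(i): when $n \leq k+1$ (in particular when $n < k$), the representation $\C^{kn}$ is $1$-large, so the ideal $\calJ$ of $\R[V]$ generated by the components of the moment map $\bJ$ is real radical, i.e. $\calJ = \calI_Z$. Intersecting with $\R[V]^{\OO_n}$ gives $\calI_Z^{\OO_n} = \calJ \cap \R[V]^{\OO_n}$, and one must show this equals the extension of $\calJ$ to the invariant ring, i.e. that $\calJ$ is already generated over $\R[V]^{\OO_n}$ by invariant elements — but since the $J_{\alpha,\beta}$ are not themselves invariant, this requires care.

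The cleanest route around that last point, and the step I expect to be the main obstacle, is to pass through the quadratic invariants directly rather than through $\calJ$. Concretely, I would argue: working in the polynomial presentation $\R[x_{i,j}] \to \R[V]^{\OO_n}$ with kernel the off-shell ideal $\calI_{\mathrm{off}}$ generated by the $(n+1)$-minors, the preimage of $\calI_Z^{\OO_n}$ is the ideal of all polynomials $f(x_{i,j})$ whose image vanishes on $Z$. By Proposition \ref{prop:JiffQ} the $Q_{i,j}$ cut out $Z$ set-theoretically among points of $V$; the content needed is the scheme-theoretic (ideal-level) statement that, modulo $\calI_{\mathrm{off}}$, the ideal $\langle Q_{i,j}\rangle$ is already radical in the appropriate sense. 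Here I would use that the $1$-largeness from Theorem \ref{thrm:LargeCases}(i) makes $\R[V]/\calJ$ reduced, hence its invariant subring $(\R[V]/\calJ)^{\OO_n} = \R[V]^{\OO_n}/\calI_Z^{\OO_n}$ is reduced, and then identify this invariant subring with $\R[x_{i,j}]/(\calI_{\mathrm{off}} + \langle Q_{i,j}\rangle)$ by checking the latter is reduced and has the right closed points — the matching of closed points being exactly Proposition \ref{prop:JiffQ} together with the fact (noted after Proposition \ref{prop:JiffQ}, via Cartan's Lemma) that on $Z$ the rank of $X$ is at most $k$, so no extra minor conditions beyond the $(n+1)$-minors are imposed. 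I would finish by assembling these equalities of ideals and translating back to the statement about on-shell relations, and I would flag the reducedness verification in the $n < k$ case as the one spot where the argument is more than formal.
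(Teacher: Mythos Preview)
Your treatment of the case $n \geq k$ is correct and matches the paper: Proposition~\ref{prop:JiffQ} gives the real-radical description of $\calI_Z^{\OO_n}$, and Proposition~\ref{prop:QsGenMinors} supplies the statement about the $(k+1)\times(k+1)$-minors. The paper's proof is nothing more than ``combining Propositions~\ref{prop:JiffQ} and~\ref{prop:QsGenMinors} with the observations above,'' so you have it.

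For $n < k$, however, you are attempting to prove strictly more than what the theorem actually asserts. Despite the literal wording ``generated by,'' the paper's intended claim---made explicit both in the remark immediately following the theorem and in Section~\ref{sec:Computations}---is that the ideal of on-shell relations is the \emph{real radical} of the ideal generated by the $Q_{i,j}$ together with the $(n+1)\times(n+1)$-minors. That is the same real-radical statement as in the $n \geq k$ case; the only difference is that when $n < k$ the off-shell relations (the $(n+1)$-minors) are not already contained in $\langle Q_{i,j}\rangle$, so they must be listed separately. This follows directly from Proposition~\ref{prop:JiffQ} and the Second Fundamental Theorem, with no additional input.

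The stronger assertion you are trying to establish---that no real-radical closure is needed when $n < k$---is precisely what the paper \emph{conjectures} but does not prove (``We suspect that the ideal generated by the $Q_{i,j}$ \ldots\ is in fact real''). Your $1$-largeness argument does not close this gap: knowing that $\calJ$ is real radical in $\R[V]$ tells you $\calI_Z = \calJ$, but it does not by itself identify $\calJ \cap \R[V]^{\OO_n}$ with the ideal generated by the $Q_{i,j}$ inside $\R[V]^{\OO_n}$. The reducedness step you flag as ``the one spot where the argument is more than formal'' is exactly the open question, and the paper verifies it only computationally for small $k$.
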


We suspect that the ideal generated by the $Q_{i,j}$ (along with the
$(n+1)\times(n+1)$-minors of $X$ when $n < k$) is in fact real, which we observe is
the case for small values of $k$ in Section \ref{sec:Computations}. However, the
description given by Theorem \ref{thrm:IdealRelations} will be sufficient for
our purposes.

Note that it is easy to see that the $r\times r$-minors of $X$ do not vanish on $Z$ when
when $r \leq k, n$, so that the relations given by minors are not superfluous in the cases
where $n < k$.
Specifically, setting each $\bq_\ell = \bp_\ell = \be_\ell$ to be a standard basis
vector for $\ell \leq r$ and $\bq_\ell = \bp_\ell = 0$ for $\ell > r$
yields an element of $V_{k,n}$ that is clearly in $Z$ yet on which an $r\times r$-minor
does not vanish (and other minors can be dealt with similarly); see
\cite[page 24]{LermanMontgomerySjamaar}. Hence we have the following.

\begin{corollary}
\label{cor:IsoCases}
Fix $k \geq 1$. The symplectic quotients $M_{0,k,n}$ and $M_{0,k,m}$ are $\Z^+$-graded regularly
symplectomorphic for each $m, n \geq k$. The symplectic quotients for $1\leq n\leq k$ are not
homeomorphic.
\end{corollary}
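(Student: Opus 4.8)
The plan is to extract the isomorphism statement directly from Theorem \ref{thrm:IdealRelations} and the $\OO_n$-invariant theory recalled in Section \ref{sec:Setup}, and to treat the non-homeomorphism statement by a dimension count. For the first assertion, fix $k\geq 1$ and take $n\geq k$. By the First Fundamental Theorem, $\R[V_{k,n}]^{\OO_n}$ is generated by the invariants $x_{i,j}=\langle\by_i,\by_j\rangle$ for $1\leq i\leq j\leq 2k$; since $n\geq k$, one in fact has $n\geq k$, but more importantly Theorem \ref{thrm:IdealRelations} tells us that the ideal of on-shell relations among the $x_{i,j}$ is the real radical of the ideal generated by the $Q_{i,j}$ --- and crucially, the polynomials $Q_{k,n,i,j}$ defined by Equation \eqref{eq:QDef} do not depend on $n$ at all, only on $k$ and on the formal variables $x_{i,j}$. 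Thus for any two values $m,n\geq k$, the presentation
\[
    \R[M_{0,k,n}] \;\cong\; \R[x_{i,j}\mid 1\leq i\leq j\leq 2k]\big/\sqrt[\R]{\langle Q_{i,j}\rangle}
\]
is literally the same presentation in both cases; one still must check that the off-shell relations (the $(n+1)\times(n+1)$-minors of $X$, present when $n<2k$) are absorbed, but by the second half of Theorem \ref{thrm:IdealRelations} those minors are divisible by the $(k+1)\times(k+1)$-minors of $X$, which lie in the ideal generated by the $Q_{i,j}$ when $n\geq k$, hence contribute nothing new to the radical. So the identity map on the $x_{i,j}$ furnishes a graded $\R$-algebra isomorphism $\R[M_{0,k,m}]\xrightarrow{\sim}\R[M_{0,k,n}]$.

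It remains to see this isomorphism is a $\Z^+$-graded regular symplectomorphism. Gradedness is immediate since each $x_{i,j}$ has degree $2$ in both phase spaces and the isomorphism sends $x_{i,j}\mapsto x_{i,j}$. That the map is Poisson follows because the Poisson bracket of two generators $x_{i,j}$, $x_{k,l}$ is computed by Equation \eqref{eq:PoissonBrackets}, and those structure formulas are again independent of $n$ --- they are universal identities in the $x$'s. Finally, that the induced homeomorphism of Zariski closures restricts to a homeomorphism of the semialgebraic sets themselves follows from the general principle quoted in Section \ref{sec:Setup} (a graded isomorphism of regular function algebras that is moreover Poisson gives a regular symplectomorphism), together with the fact that $M_{0,k,n}$ is, under the Hilbert embedding, cut out inside $\R^{\binom{2k}{2}}$ by the same equations independent of $n\geq k$. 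Invoking \cite[Theorem 6]{FarHerSea}, this upgrades to an isomorphism of Poisson differential spaces.

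For the second assertion, that $M_{0,k,n}$ for $1\leq n\leq k$ are pairwise non-homeomorphic, the approach is to compute the dimension of the symplectic quotient as a real algebraic set and observe it strictly increases with $n$ in this range. By Cartan's Lemma (cited on \cite[page 23]{LermanMontgomerySjamaar}), for $\by\in Z$ the span of $\{\by_1,\ldots,\by_{2k}\}$ has dimension at most $\min(n,k)=n$, and the generic point of $Z$ achieves this bound, so $Z_{k,n}$ has a dense open subset where the isotropy is the stabilizer in $\OO_n$ of an $n$-dimensional subspace, which is finite (indeed trivial on $\SO_n$ up to the relevant subgroup). A standard orbit-dimension count gives $\dim M_{0,k,n}=\dim Z_{k,n}-\dim\OO_n$; alternatively, one reads the dimension off the rank stratification of $X$, namely the locus where $X=(x_{i,j})$ has rank exactly $n$, which is a well-understood determinantal-type computation. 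Either way the dimension is strictly monotone in $n$ for $1\leq n\leq k$, so no homeomorphism can exist between distinct members. The example already recorded before the corollary statement --- setting $\bq_\ell=\bp_\ell=\be_\ell$ for $\ell\leq r$ and zero otherwise --- shows the rank-$r$ stratum is nonempty for each $r\leq\min(n,k)$, confirming that the top rank genuinely reaches $n$.

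The main obstacle I expect is not the dimension count but the bookkeeping in the first part: one must be careful that ``real radical of $\langle Q_{i,j}\rangle$'' really does define the same \emph{semialgebraic set} (not merely the same algebra) independently of $n\geq k$, and that the Hilbert embedding image coincides. This is where Proposition \ref{prop:JiffQ} does the real work --- it guarantees that the vanishing locus of the $Q_{i,j}$ on configuration space corresponds under the invariant map to the shell, uniformly in $n$ --- so the obstacle is really just assembling Proposition \ref{prop:JiffQ}, Proposition \ref{prop:QsGenMinors}, and the FFT/SFT into a clean statement that the two differential spaces have identical global charts.
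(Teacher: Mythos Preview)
Your argument for the first assertion tracks the paper's closely, but there is one genuine gap: you have not verified that the identity on the $x_{i,j}$ carries the \emph{semialgebraic} set $M_{0,k,m}$ onto $M_{0,k,n}$, as opposed to merely its Zariski closure. You assert this ``follows from the general principle quoted in Section~\ref{sec:Setup},'' but no such principle is stated there; by the definitions given in that section a regular symplectomorphism requires the semialgebraic condition to be checked separately, and \cite[Theorem~6]{FarHerSea} takes a regular symplectomorphism as \emph{hypothesis}, not as a consequence of a Poisson algebra isomorphism. Proposition~\ref{prop:JiffQ} only pins down the shell inside $V$; it says nothing about which symmetric matrices $X$ actually arise as Gram matrices of $2k$ vectors in $\R^n$. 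The paper fills this gap by invoking \cite[Example~0.8]{ProcesiSchwarz}: the image of $V_{k,n}/\OO_n$ under the Hilbert embedding is precisely the cone of positive semidefinite symmetric $2k\times 2k$ matrices of rank at most $n$, and once $n\geq k$ the rank constraint is already forced on the shell by Proposition~\ref{prop:QsGenMinors}, so the semialgebraic description is independent of $n$. Without this (or an equivalent argument about the inequalities), you have established only a graded Poisson isomorphism of $\R$-algebras.

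For the second assertion your dimension idea is correct in spirit but vaguer than necessary. The paper avoids the orbit-dimension bookkeeping on the real side entirely: it applies the Kempf-Ness homeomorphism and compares Krull dimensions of the GIT quotients $\C^{mk}\git\OO_m(\C)$ and $\C^{nk}\git\OO_n(\C)$ directly, observing that for $m<n\leq k$ the $(m+1)\times(m+1)$-minors of the Gram matrix are relations for the former but not the latter. This sidesteps having to justify that the principal isotropy on $Z_{k,n}$ is finite and that $\dim M_{0,k,n}=\dim Z_{k,n}-\dim\OO_n$, which your sketch leaves implicit.
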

\begin{proof}
When $n \geq k$, the description given by Theorem \ref{thrm:IdealRelations} of the graded algebras
of regular functions does not depend on $n$, so the generators $x_{i,j}$ of the regular functions
of $M_{0,k,k}$ and $M_{0,k,n}$ obviously define isomorphic (graded) global charts.
Note that the ideal $\mathcal{J}_{k,k}$ of the moment map is real by Theorem \ref{thrm:LargeCases},
which implies that $\mathcal{J}_{k,k}$ generates the vanishing ideal of $Z_{k,k}$ in $\mathcal{C}^\infty(V_{k,k})$,
see \cite[Theorem 6.3 and Remark 6.4]{ArmsGotayJennings}. Therefore, the vanishing ideal of $Z_{k,k}$
in $\mathcal{C}^\infty(V_{k,k})^{\OO_k}$ is generated by the invariants in $\mathcal{J}_{k,k}$
in this case, and the same follows for the isomorphic cases $n \geq k$.
The inequalities defining the semialgebraic sets $V_{k,k}/\OO_k$ and $V_{k,n}/\OO_n$ are determined
in \cite[Example 0.8]{ProcesiSchwarz}, and coincide with the requirement that the Gram matrix $X$
is positive semidefinite (i.e. all minors are nonnegative). These obviously coincide for the full
quotients and hence the symplectic quotients, so this isomorphism defines a $\Z^+$-graded regular
diffeomorphism. The Poisson brackets of the generators $x_{i,j}$ are computed in
Equation \eqref{eq:PoissonBrackets}, and one observes that they do not depend on $n$;
therefore, this isomorphism is Poisson.

When $m < n \leq k$, the dimensions of $M_{0,m,k}$ and $M_{0,n,k}$ do not coincide.
This can be seen by applying the Kempf-Ness homeomorphism and considering the affine
GIT quotients $\C^{mk}\git \OO_m(\C)$ and $\C^{nk}\git \OO_n(\C)$. Each have ${k\choose 2}$
fundamental invariants given by the Euclidean inner products, the $(n+1)\times(n+1)$-minors
of the $k\times k$ Gram matrix of invariants are relations for both, but the
$(m+1)\times(m+1)$-minors of the Gram matrix are relations only for $\C^{mk}\git \OO_m(\C)$.
It follows that the Krull dimension of the ring $\C[\C^{mk}]^{\OO_m(\C)}$ is strictly lower
than that of $\C[\C^{nk}]^{\OO_n(\C)}$, from which the result follows.
\end{proof}


\section{Symplectic quotients by $\SO_n$ and orbifold criteria}
\label{sec:OrbitTypes}

In this section, we consider the relationship between the $\OO_n$- and $\SO_n$-symplectic
quotients and determine which of these quotients are $\Z^+$-graded regularly symplectomorphic
to orbifolds. We first note the following. Let $\by \in V_{k,n}$, and let $W_{\by}$ denote
the vector subspace of $\R^n$ spanned by $\{\by_1,\ldots,\by_{2k}\}$. Then the $\OO_n$-isotropy
group of $\by$ is trivial if and only if $\dim W_{\by} = n$. Specifically, if
$\dim W_{\by} \leq n-1$, then there is a reflection in $\OO_n$ through a hyperplane
containing $W_{\by}$, which therefore fixes $\by$; if $W_{\by} = \R^n$, then
any element of $\OO_n$ that fixes $\by$ must fix each element of $\R^n$ and hence is trivial.
Similarly, the $\SO_n$-isotropy group of $\by$ is trivial if and only if
$\dim W_{\by} \geq n-1$, as any subspace of $\R^n$ of codimension $2$ is fixed by
a nontrivial rotation, and conversely every element of $\SO_n$ fixes a subspace
of $\R^n$ of codimension at least $2$.

With this, we demonstrate the following.

\begin{theorem}
\label{thrm:SOnIso}
Let $k$ be a positive integer and let $n \geq k+1$. Then the symplectic quotients
$M_{0,k,n}$ and $M_{0,k,n}^{\SO}$ coincide.
\end{theorem}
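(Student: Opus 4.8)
The plan is to show that $\mathcal{I}_Z^{\OO_n}$ and $\mathcal{I}_Z^{\SO_n}$ describe the same subalgebra of the smooth functions on $Z$ — equivalently, that every $\SO_n$-invariant regular function on $Z$ agrees on $Z$ with an $\OO_n$-invariant one, and vice versa — by analyzing where the two invariant rings $\R[V]^{\OO_n}$ and $\R[V]^{\SO_n}$ can differ on the shell. First I would recall from Section \ref{sec:Setup} that $\R[V]^{\SO_n}$ is generated over $\R[V]^{\OO_n}$ by the determinants $\det(\by_{i_1},\ldots,\by_{i_n})$ for $1 \le i_1 < \cdots < i_n \le 2k$, so it suffices to prove that each such determinant vanishes identically on $Z$. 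This is exactly where the hypothesis $n \ge k+1$ enters: by the application of Cartan's Lemma recorded just after Theorem \ref{thrm:IdealRelations} (cf. \cite[page 23]{LermanMontgomerySjamaar}), if $\bJ(\by) = 0$ then $\dim W_{\by} \le k < n$, so the $2k$ vectors $\by_1,\ldots,\by_{2k}$ span a proper subspace of $\R^n$ and any $n$ of them are linearly dependent; hence $\det(\by_{i_1},\ldots,\by_{i_n}) = 0$ for every $\by \in Z$.

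From this, the key steps are: (1) conclude that on the zero fibre the extra $\SO_n$-generators restrict to zero, so $\R[V]^{\SO_n}$ and $\R[V]^{\OO_n}$ restrict to the same functions on $Z$; (2) translate this into the statement that the restriction maps induce an isomorphism $\R[V]^{\OO_n}/\mathcal{I}_Z^{\OO_n} \xrightarrow{\ \sim\ } \R[V]^{\SO_n}/\mathcal{I}_Z^{\SO_n}$, i.e.\ $\R[M_0] \cong \R[M_0^{\SO}]$, and that this isomorphism is the identity on the common generators $x_{i,j}$; (3) observe that the global charts agree as well — the Hilbert embeddings of both $M_{0,k,n}$ and $M_{0,k,n}^{\SO}$ use the $x_{i,j}$ (the determinants contribute nothing on $Z$), so the underlying semialgebraic sets in $\R^{\binom{2k}{2}}$ literally coincide, giving a graded regular diffeomorphism; (4) note the Poisson structures coincide because both are induced from $\R[V]$ and the generators $x_{i,j}$ have the brackets of Equation \eqref{eq:PoissonBrackets}, which do not involve $n$ or depend on $\OO_n$ versus $\SO_n$. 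Together these say the two symplectic quotients coincide as $\Z^+$-graded Poisson differential spaces with global charts.

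The one point requiring a little care — and the likely main obstacle — is step (1)'s converse direction together with the set-theoretic claim: one must be sure that $Z$ is the \emph{same} subset of $V$ for both groups (it is, since the moment maps coincide), and that no $\OO_n$-invariant becomes a new relation when passing to $\SO_n$, i.e.\ that $\mathcal{I}_Z^{\OO_n} = \mathcal{I}_Z^{\SO_n} \cap \R[V]^{\OO_n}$. This is immediate from $\mathcal{I}_Z^{\OO_n} = \mathcal{I}_Z \cap \R[V]^{\OO_n}$ and $\mathcal{I}_Z^{\SO_n} = \mathcal{I}_Z \cap \R[V]^{\SO_n}$, since $\R[V]^{\OO_n} \subseteq \R[V]^{\SO_n}$. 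One should also confirm that the surjectivity of $\R[V]^{\OO_n} \to \R[V]^{\SO_n}/\mathcal{I}_Z^{\SO_n}$ uses precisely the vanishing of the determinants on $Z$ established above, so that every class in $\R[M_0^{\SO}]$ is represented by a polynomial in the $x_{i,j}$ alone. I expect no genuine difficulty beyond bookkeeping; the essential input is the dimension bound $\dim W_{\by} \le k < n$ on the shell, which kills the determinantal invariants and makes the two quotients indistinguishable.
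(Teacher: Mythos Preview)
Your argument is correct, but it proceeds along the algebraic route rather than the geometric one the paper uses for its main proof. The paper argues directly at the level of orbits: given $\by\in Z$ and $g\in\OO_n\smallsetminus\SO_n$, the bound $\dim W_{\by}\le k\le n-1$ (obtained there via Proposition~\ref{prop:QsGenMinors}) produces a reflection $r$ fixing $\by$, and writing $g=hr$ with $h\in\SO_n$ gives $g\by=h\by$; hence $\OO_n$- and $\SO_n$-orbits in $Z$ coincide, so $Z/\OO_n=Z/\SO_n$ as sets and therefore as symplectic quotients. Your approach instead shows that the extra $\SO_n$-generators---the $n\times n$ determinants---vanish on $Z$ (from the same dimension bound, which you attribute to Cartan's Lemma), and then deduces that the two Hilbert embeddings, regular function algebras, and Poisson structures agree. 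The paper in fact records exactly this algebraic viewpoint in the paragraph immediately following its proof, so your argument is essentially that remark carried out in full. The geometric proof is shorter and makes the statement ``the quotients coincide'' literal; your version has the advantage of making explicit why the global charts and graded Poisson structures match, which is what one ultimately needs for the applications later in the paper.
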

\begin{proof}
Let $\by \in V_{k,n}$ such that $\bJ(\by) = 0$ and choose
$g\in\OO_n\smallsetminus\SO_n$. By Proposition \ref{prop:QsGenMinors},
the dimension of $W_{\by}$ is at most $k \leq n-1$. Then there is a reflection
$r \in \OO_n\smallsetminus\SO_n$ through a hyperplane of $\R^n$ that contains $W_{\by}$,
which therefore fixes $\by$. As $\OO_n/\SO_n \cong \Z_2$, there is an $h\in\SO_n$
such that $g = hr$, and then $g\by = hr\by = h\by$. It follows that the $\OO_n$-
and $\SO_n$-orbits of $\by$ coincide.
\end{proof}

For the corresponding complex symplectic quotients, a similar statement has been observed
in \cite[Appendix A.2.1]{TerpereauThesis}.

More algebraically, note that the invariants of the $\SO_n$-action on $V_{k,n}$
are given by the $\OO_n$-invariants along with the determinants
$\det(\by_{i_1},\ldots,\by_{i_n})$ of collections of $n$ vectors. By Proposition
\ref{prop:QsGenMinors}, these determinants all vanish. Hence these ``new" invariants
are forced to vanish, and the $\SO_n$-symplectic quotients are defined by identical
invariants and relations as the corresponding $\OO_n$-symplectic quotients.
Similarly, the Hilbert embeddings differ only by coordinates that vanish on the shell.

Note that the consequence of Theorem \ref{thrm:SOnIso} does not hold when $n\leq k$.
In this case, we may choose $\bq_\ell = \bp_\ell = \be_\ell$ for $\ell \leq n$ and
$\bq_\ell = \bp_\ell = 0$ for $\ell > n$. The result is a point that is easily seen to
be an element of the shell and have trivial $\OO_n$-isotropy so that the $\OO_n$- and
$\SO_n$-orbits do not coincide. Hence, when $n \leq k$, the $\OO_n$-quotient is a
branched double-cover of the corresponding $\SO_n$-quotient as explained in
\cite[Remark 5.4]{SjamaarLerman}. The branch points are those points where the $\SO_n$-
and $\OO_n$-orbits coincide, which as seen in the proof of Theorem \ref{thrm:SOnIso}
are exactly the points $\by$ such that $W_{\by}$ has dimension at most $n-1$.

The following consequence of these observations will be useful in the sequel.
Consider the embedding $\iota_{k-1}\co V_{k,k-1} \to V_{k,k}$ where each $\R^{k-1}$
factor is embedded into each $\R^k$ factor as the first $k-1$ coordinates. We consider $\OO_{k-1}$
as a subgroup of $\OO_k$ (and similarly $\mathfrak{o}_{k-1}$ as a subalgebra of
$\mathfrak{o}_k$) via the embedding $\iota_{k-1}$.

\begin{lemma}
\label{lem:Codim2Strat}
The symplectic quotient $M_{0,k,k}$ contains an orbit type stratum $S$ of real
codimension $2$. The closure $S^{cl}$ of $S$ (in the classical topology) is the
set of orbits of points in $\iota_{k-1}(Z_{k,k-1})$, and hence $\iota_{k-1}$
induces a $\Z^+$-graded regular symplectomorphism between $S^{cl}$ and
$M_{0,k,k-1}$, where the global chart for $S^{cl}$ is the restriction of that
for $M_{0,k,k}$. Moreover, the Poisson homomorphism $\R[M_{0,k,k}] \to \R[M_{0,k,k-1}]$
induced by $\iota_{k-1}$ is surjective.
\end{lemma}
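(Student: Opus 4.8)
The plan is to let $S$ be the orbit type stratum of $M_{0,k,k}$ whose points have $\OO_k$-isotropy group generated by a reflection of $\R^k$ through a hyperplane (all such reflections being conjugate in $\OO_k$, this is a single stratum), to compute its codimension by a dimension count, and to identify $S^{cl}$ by reducing everything to a density statement inside $Z_{k,k-1}$. By the isotropy description preceding the lemma, $\by\in Z_{k,k}$ has isotropy of this type exactly when $\dim W_{\by}=k-1$, while points of full span $k$ have trivial isotropy and points with $\dim W_{\by}\le k-2$ have strictly larger isotropy; thus $S=\{[\by]:\dim W_{\by}=k-1\}$, which is nonempty, e.g.\ it contains the orbit of the point with $\bq_\ell=\bp_\ell=\be_\ell$ for $\ell\le k-1$ and $\bq_k=\bp_k=0$. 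Since $\iota_{k-1}$ pulls the components of $\bJ_{k,k}$ back to the components of $\bJ_{k,k-1}$ (and the remaining ones to zero), one has $\iota_{k-1}(V_{k,k-1})\cap Z_{k,k}=\iota_{k-1}(Z_{k,k-1})$; by moving $W_{\by}$ into $\R^{k-1}$ with an element of $\OO_k$, and observing that when $\dim W_{\by'}=k-1$ the $\OO_k$-orbit of $\iota_{k-1}(\by')$ meets $\iota_{k-1}(V_{k,k-1})$ in exactly the $\OO_{k-1}$-orbit of $\by'$, one identifies $S$ with $Z_{k,k-1}^{\mathrm{prin}}/\OO_{k-1}$, where $Z_{k,k-1}^{\mathrm{prin}}\subseteq Z_{k,k-1}$ is the open subset of points of full span $k-1$.

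For the codimension I would count dimensions. By Theorem~\ref{thrm:LargeCases}(i), applicable for $n=k$ and $n=k-1$ since both are $\le k+1$, the ideal $\mathcal{J}_{k,n}^\C$ is a reduced irreducible complete intersection, so $Z_{k,n}$ is irreducible of dimension $2kn-\binom n2$; since $\OO_n$ acts freely on the dense open locus of points of full span, $\dim M_{0,k,n}=2kn-n(n-1)$. Hence $\dim M_{0,k,k}=k(k+1)$ and $\dim S=\dim M_{0,k,k-1}=(k-1)(k+2)=k^2+k-2$, so $S$ has real codimension $2$ in $M_{0,k,k}$.

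Next I would show $S^{cl}=T$, where $T:=\pi(\iota_{k-1}(Z_{k,k-1}))$ is the set of $\OO_k$-orbits of points of $\iota_{k-1}(Z_{k,k-1})$ and $\pi\co Z_{k,k}\to M_{0,k,k}$ is the quotient map. One inclusion is immediate: $S\subseteq T$ by the span argument above, and $T$ is closed because $\OO_k$ is compact, so $\OO_k\cdot\iota_{k-1}(Z_{k,k-1})$ is closed in $Z_{k,k}$; hence $S^{cl}\subseteq T$. For the reverse, $Z_{k,k-1}^{\mathrm{prin}}$ is a nonempty Zariski-open subset of the irreducible variety $Z_{k,k-1}$; since $\mathcal{J}_{k,k-1}$ is real radical (Theorem~\ref{thrm:LargeCases}(i)), it follows that $Z_{k,k-1}^{\mathrm{prin}}$ is dense in $Z_{k,k-1}$ for the classical topology. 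Applying the continuous surjection $\pi\circ\iota_{k-1}\co Z_{k,k-1}\to T$ then shows $S=\pi(\iota_{k-1}(Z_{k,k-1}^{\mathrm{prin}}))$ is dense in $T$, so $T\subseteq S^{cl}$. I expect this classical-density step to be the main obstacle, and the place where the real radicality of $\mathcal{J}_{k,k-1}$ is genuinely used.

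Finally I would construct the symplectomorphism algebraically. Restriction of functions along $\iota_{k-1}$ carries the invariant $x_{i,j}=\langle\by_i,\by_j\rangle$ on $V_{k,k}$ to the like-named invariant on $V_{k,k-1}$; since by the First Fundamental Theorem both $\R[V_{k,k}]^{\OO_k}$ and $\R[V_{k,k-1}]^{\OO_{k-1}}$ are generated by their respective $x_{i,j}$, the induced map $\iota_{k-1}^\ast\co\R[V_{k,k}]^{\OO_k}\to\R[V_{k,k-1}]^{\OO_{k-1}}$ is surjective, and because $\iota_{k-1}(Z_{k,k-1})\subseteq Z_{k,k}$ it sends $\mathcal{I}_{Z_{k,k}}^{\OO_k}$ into $\mathcal{I}_{Z_{k,k-1}}^{\OO_{k-1}}$ and so descends to a surjection $\R[M_{0,k,k}]\twoheadrightarrow\R[M_{0,k,k-1}]$ (which is the final assertion of the lemma). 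An invariant $f\in\R[V_{k,k}]^{\OO_k}$ vanishes on $\iota_{k-1}(Z_{k,k-1})$ if and only if $\iota_{k-1}^\ast f\in\mathcal{I}_{Z_{k,k-1}}^{\OO_{k-1}}$, so the kernel of this surjection is exactly the vanishing ideal of $S^{cl}=T$ in $\R[M_{0,k,k}]$; hence it induces a graded isomorphism $\R[S^{cl}]\xrightarrow{\ \sim\ }\R[M_{0,k,k-1}]$, with the grading preserved because $\iota_{k-1}^\ast$ preserves the degrees of the generators. This isomorphism is Poisson: as the bracket is a biderivation it suffices to check it on the generators $x_{i,j}$, and the brackets in \eqref{eq:PoissonBrackets} are linear in the $x$'s with coefficients not depending on the ambient dimension, so they are respected by $x_{i,j}\mapsto x_{i,j}$. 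It is a regular diffeomorphism because, under the Hilbert embeddings into $\R^{\binom{2k}{2}}$, the semialgebraic sets $S^{cl}$ and $M_{0,k,k-1}$ have the same image, namely $\{(\langle\by_i',\by_j'\rangle)_{i<j}:\by'\in Z_{k,k-1}\}$. Assembling these facts gives the asserted $\Z^+$-graded regular symplectomorphism $S^{cl}\cong M_{0,k,k-1}$, with global chart on $S^{cl}$ the restriction of that on $M_{0,k,k}$.
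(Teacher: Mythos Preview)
Your argument is correct and follows essentially the same outline as the paper's proof: identify $S$ as the stratum of points with $\dim W_{\by}=k-1$, use the $\OO_k$-transitivity on hyperplanes to reduce to $\iota_{k-1}(V_{k,k-1})$, check $\bJ_{k,k-1}=\bJ_{k,k}\circ\iota_{k-1}$, and verify that $\iota_{k-1}^\ast$ induces a surjective graded Poisson map on regular functions.

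The one genuine methodological difference is the codimension computation. The paper establishes $\codim_\R S^{cl}=2$ via the Kempf--Ness homeomorphism: $M_{0,k,k}\simeq\C^{k^2}\git\OO_k(\C)$ is affine space of complex dimension $\binom{k+1}{2}$, while the closure of $S$ corresponds to the hypersurface in $\C^{\binom{k+1}{2}}$ cut out by the determinant of the Gram matrix. You instead count real dimensions directly using that $\mathcal{J}_{k,n}^\C$ is a complete intersection (Theorem~\ref{thrm:LargeCases}(i)) and that the principal isotropy is trivial, obtaining $\dim M_{0,k,n}=2kn-n(n-1)$. Your route is more elementary and stays entirely on the real side; the paper's route has the advantage of identifying $S^{cl}$ concretely as a determinantal hypersurface. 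You are also somewhat more explicit than the paper about the density step $S^{cl}=T$, correctly isolating the need for $Z_{k,k-1}^{\mathrm{prin}}$ to be classically dense in $Z_{k,k-1}$ and invoking irreducibility together with real radicality; the paper leaves this largely implicit. One cosmetic slip: the Hilbert embedding target should be $\R^{\binom{2k+1}{2}}$ (the number of invariants $x_{i,j}$ with $1\le i\le j\le 2k$), not $\R^{\binom{2k}{2}}$, though this does not affect the argument.
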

\begin{proof}
Let $R = \{ \by\in V_{k,k} \mid \dim_\R W_{\by} = n-1 \}$. It is easy to see
that $R$ is exactly the orbit type corresponding to isotropy groups in $\OO_k$
generated by a single reflection. As $\OO_k$ acts transitively on
the Grassmannian of $(k-1)$-planes in $\R^k$, each orbit in $R$ intersects
$\iota_{k-1}(V_{k,k-1})$. Moreover, $R \cap \iota_{k-1}(V_{k,k-1})$ is equal to
the image under $\iota_{k-1}$ of the principal $\OO_{k-1}$-orbit type in
$V_{k,k-1}$ as explained above, and therefore $R \cap \iota_{k-1}(V_{k,k-1})$
is dense in $\iota_{k-1}(V_{k,k-1})$. Clearly, the $\OO_k$-orbits intersect
$\iota_{k-1}(V_{k,k-1})$ as $\OO_{k-1}$-orbits, and hence $\iota_{k-1}$ induces
a homeomorphism from the principal orbit type of $V_{k,k-1}/\OO_{k-1}$ onto
$R/\OO_k$.

From Equation \eqref{eq:DefJ}, one easily checks
that the moment map $\bJ_{k,k}$ of the $\OO_k$-action on $V_{k,k}$ restricts
to $\iota_{k-1}(V_{k,k-1})$ as the moment map $\bJ_{k,k-1}$ of the
$\OO_{k-1}$-action, i.e. $\bJ_{k,k-1} = \bJ_{k,k}\circ\iota_{k-1}$,
so that $Z_{k,k} \cap \iota_{k-1}(V_{k,k-1}) = \iota_{k-1}(Z_{k,k-1})$.
Hence $\iota_{k-1}$ induces a homeomorphism from the principal
orbit type stratum of $M_{0,k,k-1}$ onto
$(R\cap \iota_{k-1}(Z_{k,k-1}))/\OO_{k-1}$, which is homeomorphic
to $S := (R \cap Z_{k,k})/\OO_n$. Note that as $R$ is an $\OO_k$-orbit type
stratum of $V_{k,k}$, $S$ is by definition a stratum of the symplectic
stratified space $M_{0,k,k}$; see \cite[Theorem 2.1]{SjamaarLerman}.

Recall that $M_{0,k,k}$ is homeomorphic to the GIT quotient
$(\C^{k^2})\git \OO_k(\C)$ by the Kempf-Ness homeomorphism \cite{KempfNess,GWSkempfNess}.
By the Fundamental Theorem of Invariant Theory for $\OO_n$,
\cite[Sections 9.3 and 9.4]{PopovVinberg} and \cite[Sections 9 and 17]{Weyl},
it is easy to see that $\C[\C^{k^2}]^{\OO_k(\C)}$ is generated by ${k+1\choose 2}$
quadratic invariants with no relations, and hence that $(\C^{k^2})\git\OO_k(\C)$ is
complex ${k+1\choose 2}$-dimensional affine space. Similarly, $\C[\C^{k(k-1)}]^{\OO_k(\C)}$
is generated by ${k+1\choose 2}$ quadratic invariants with the single
relation that the determinant of the Gram matrix vanishes, and hence is
a complex hyperplane in $\C^{k+1\choose 2}$. Hence $S^{cl}$ has real codimension $2$
in $M_{0,k,k}$.

Finally, note that as $\bJ_{k,k-1} = \bJ_{k,k}\circ\iota_{k-1}$, the pullback
$\iota_{k-1}^\ast\co \R[V_{k,k}] \to \R[V_{k,k-1}]$ induces a homomorphism
$\R[M_{0,k,k}] \to \R[M_{0,k,k-1}]$. Recall that $\R[M_{0,k,k-1}]$ is generated by the ${2k+1\choose 2}$
scalar products, which all occur as pullbacks via $\iota_{k-1}$ of the scalar products that generate
$\R[M_{0,k,k}]$. It is then clear that the map $\R[M_{0,k,k}] \to \R[M_{0,k,k-1}]$
induced by $\iota_{k-1}$ is a surjective Poisson homomorphism that induces a Poisson
isomorphism $\R[S^{cl}]\to\R[M_{0,k,k-1}]$, where $\R[S^{cl}]$ is the quotient of
$\R[V_{k,k}]^{\OO_k}$ by the invariant part of the vanishing ideal of $R^{cl}$, i.e.
the algebra of regular functions corresponding to the restriction of the global chart
for $\R[M_{0,k,k}]$ to $S^{cl}$.
\end{proof}

Now we are ready to demonstrate that the only symplectic quotients under consideration
that are $\Z^+$-graded regularly symplectomorphic to a linear symplectic orbifold are the cases
where $k = 1$ or $n = 1$. By a \emph{linear symplectic orbifold},
we mean the symplectic orbifold given by the quotient of $\C^n$ by a finite subgroup of
$\U_n$. The argument below uses ingredients developed in \cite[Section 3]{HerbigSchwarzSeaton}
and \cite[Section 3]{HerbigSeatonImpos}.

\begin{theorem}
\label{thrm:OrbifoldCriteria}
The symplectic quotient $M_{0,k,n}$ (respectively $M_{0,k,n}^{\SO}$) is $\Z^+$-graded regularly
symplectomorphic to a linear symplectic orbifold if and only if $k = 1$ or $n=1$.
\end{theorem}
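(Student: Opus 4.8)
The plan is to prove the two implications separately; the forward direction is an explicit identification, and the converse is the substantial part.

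\emph{Sufficiency.} Suppose $k=1$ or $n=1$. If $n=1$ the moment map $\bJ_{k,1}$ has no components, so $Z_{k,1}=V_{k,1}$ and $M_{0,k,1}=\R^{2k}/\OO_1$ is the linear symplectic orbifold $\C^k/\{\pm\id\}$ (the diagonal $\Z_2\subset\U_k$ preserves the standard Hermitian form), while $M^{\SO}_{0,k,1}=\R^{2k}/\SO_1=\C^k$; in both cases the graded Poisson structure agrees because the bracket on the generators is the restriction of that on $\R[V]$. If $k=1$ and $n\geq 2$, Proposition \ref{prop:JiffQ} and Theorem \ref{thrm:IdealRelations} give that the on-shell relations are the real radical of the principal ideal $(Q_{1,n,1,2})$ with $Q_{1,n,1,2}=x_{1,1}x_{2,2}-x_{1,2}^2$; since this quadric is irreducible with smooth real points, that real radical equals $(Q_{1,n,1,2})$, and a direct change of coordinates and rescaling matches the graded Poisson algebra $\R[x_{1,1},x_{1,2},x_{2,2}]/(Q_{1,n,1,2})$ (with bracket from \eqref{eq:PoissonBrackets}) and the semialgebraic conditions $x_{1,1},x_{2,2}\geq 0$ with the ring of functions and underlying set of the linear symplectic orbifold $\C/\{\pm 1\}$. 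By Theorem \ref{thrm:SOnIso} (with $k=1$, $n\geq 2$) we have $M^{\SO}_{0,1,n}=M_{0,1,n}$, so this case is finished as well.

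\emph{Necessity.} Assume $k,n\geq 2$; we must show that $M_{0,k,n}$ and $M^{\SO}_{0,k,n}$ are not $\Z^+$-graded regularly symplectomorphic to a linear symplectic orbifold. First I reduce the number of cases: by Corollary \ref{cor:IsoCases}, $M_{0,k,n}\cong M_{0,k,k}$ for all $n\geq k$, and by Theorem \ref{thrm:SOnIso}, $M^{\SO}_{0,k,n}=M_{0,k,n}$ for $n\geq k+1$. Hence it suffices to treat (a) $M_{0,k,n}$ and $M^{\SO}_{0,k,n}$ with $2\leq n\leq k-1$, (b) $M^{\SO}_{0,k,k}$, and (c) $M_{0,k,k}$ --- the last of which also accounts for the $\OO_n$- and $\SO_n$-quotients with $n>k$. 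In cases (a) and (b) the representation $k\C^n$ is $2$-large by Theorem \ref{thrm:LargeCases}(ii)--(iii), so the non-orbifold criteria of \cite[Section 3]{HerbigSchwarzSeaton} and \cite[Section 3]{HerbigSeatonImpos} apply: if such a $2$-large linear symplectic quotient were $\Z^+$-graded regularly symplectomorphic to $\C^m/\Gamma$ with $\Gamma\subset\U_m$ finite, then its Hilbert series would have to equal a Molien series $\Hilb_{\R[\C^m]^\Gamma}(t)$, and in particular the Laurent coefficient of order $\dim_\R M_0-1$ of $\Hilb_{\R[M_0]}(t)$ at $t=1$ would vanish (fixed subspaces of unitary elements have even real codimension, so the pole order drops by at least two). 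Using the complete-intersection description of $Z_{k,n}$ and the determinantal description of $\R[M_{0,k,n}]$ from Theorems \ref{thrm:LargeCases}(i) and \ref{thrm:IdealRelations}, one computes this low-order Laurent coefficient and checks that it is nonzero whenever $k,n\geq 2$ and $n\leq k$ (with the analogous computation for the $\SO_n$-quotients). This computation is, I expect, the main technical obstacle.

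For case (c), $k\C^k$ is $1$-large but not $2$-large as an $\OO_k(\C)$-module (see the remark following Theorem \ref{thrm:LargeCases}), so the criterion above is not directly available, and I would use Lemma \ref{lem:Codim2Strat} instead: $M_{0,k,k}$ has an orbit-type stratum $S$ of real codimension $2$ whose closure $S^{cl}$, with the global chart restricted from $M_{0,k,k}$, is $\Z^+$-graded regularly symplectomorphic to $M_{0,k,k-1}$. If $M_{0,k,k}$ were $\Z^+$-graded regularly symplectomorphic to a linear symplectic orbifold $\C^m/\Gamma$, the slice theorem identifies a neighbourhood of a point of a codimension-$2$ stratum of $\C^m/\Gamma$ with $\C^{m-1}\times(\C/\Z_r)$, so the closure of any such stratum --- which by the functoriality of orbit-type strata and their global charts under graded regular symplectomorphisms recalled in Section \ref{sec:Setup} corresponds to $S^{cl}$ --- is again a linear symplectic orbifold. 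Thus $M_{0,k,k-1}$ would be $\Z^+$-graded regularly symplectomorphic to a linear symplectic orbifold; for $k\geq 3$ this is case (a) with $n=k-1$, a contradiction. For $k=2$ the descent stops at $M_{0,2,1}\cong\C^2/\Z_2$, which is an orbifold, so $M_{0,2,2}$ must be handled directly: from the explicit presentation of $\R[M_{0,2,2}]$ in Theorem \ref{thrm:IdealRelations} (together with the Hilbert-series and Gorenstein information recorded in Section \ref{sec:Computations}) one checks that its Hilbert series --- equivalently, the structure of its local ring at the vertex together with the Poisson bracket --- is not realised by any finite $\Gamma\subset\U_3$. This base case, alongside the Laurent-coefficient computation in cases (a) and (b), is where the real work lies; the reductions via Corollary \ref{cor:IsoCases}, Theorem \ref{thrm:SOnIso}, and Lemma \ref{lem:Codim2Strat} are routine.
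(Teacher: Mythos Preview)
Your overall architecture matches the paper's: handle $k=1$ and $n=1$ explicitly, dispose of the $2$-large cases by citing \cite{HerbigSchwarzSeaton}, reduce everything else to $M_{0,k,k}$ via Corollary~\ref{cor:IsoCases} and Theorem~\ref{thrm:SOnIso}, treat $M_{0,2,2}$ by a Hilbert-series obstruction, and for $k\geq 3$ use the codimension-$2$ stratum of Lemma~\ref{lem:Codim2Strat} to descend to $M_{0,k,k-1}$. Two points, however, diverge from the paper in ways that matter.

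\emph{The $2$-large cases.} You describe the criterion of \cite{HerbigSchwarzSeaton} as a Laurent-coefficient test and then defer the computation as ``the main technical obstacle.'' That is not how the paper proceeds, and it is not necessary. Theorems~1.1 and~1.3 of \cite{HerbigSchwarzSeaton} give the non-orbifold conclusion directly from $2$-largeness together with stability of the representation; the paper observes stability via orthogonality and \cite[Lemma~7.11]{GWSlifting}. No Hilbert-series computation for general $(k,n)$ is needed or attempted. (The Hilbert-series argument is used only for the single base case $M_{0,2,2}$, where the paper notes that the leading Laurent coefficient $\gamma_0=5/32$ is not of the form $1/|H|$.) Your proposed route via a general Laurent-coefficient calculation would require closed formulas for $\Hilb_{\R[M_{0,k,n}]}$ that the paper does not have beyond small cases.

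\emph{The descent step for $M_{0,k,k}$, $k\geq 3$.} Your sentence ``by the functoriality of orbit-type strata and their global charts'' hides a genuine issue. The $\Z^+$-graded regular symplectomorphism $\chi$ restricts, by \cite[Theorem~2]{HerbigSeatonImpos}, to a graded regular symplectomorphism $S^{cl}\to T^{cl}$ \emph{with the restricted global charts}. On the $M_{0,k,k}$ side, Lemma~\ref{lem:Codim2Strat} supplies the surjection $\R[M_{0,k,k}]\to\R[M_{0,k,k-1}]$, so the restricted chart on $S^{cl}$ is the standard one on $M_{0,k,k-1}$. On the orbifold side, however, the restricted chart on $T^{cl}\simeq U^K/L$ is only a subalgebra $\mathcal{A}\subset\R[U^K]^L$, and you must show $\mathcal{A}=\R[U^K]^L$ before concluding that $M_{0,k,k-1}$ is graded regularly symplectomorphic to the \emph{linear} symplectic orbifold $U^K/L$. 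The paper closes this gap by complexifying, observing that $\mathcal{A}^{\C}$ is integrally closed (via normality of $(\bJ^{\C}_{k,k-1})^{-1}(0)$ from Theorem~\ref{thrm:LargeCases}(ii) and \cite[Theorem~3.16]{PopovVinberg}), that it is a separating algebra for the $L$-action, and then invoking \cite[Theorem~2.3.12]{DerskenKemperBook} to conclude $\mathcal{A}^{\C}=\C[(U\times U^\ast)^K]^L$. Without this step your descent does not yield a contradiction with the $2$-large case.
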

\begin{proof}
When $n = 1$, the group $\OO_n$ is finite, $\SO_n$ is trivial, and the moment map is zero,
so these case are quotients of affine space by a finite group and hence are in fact equal
to linear symplectic orbifolds (trivial orbifolds, i.e. affine space, in the case of $\SO_n$).
When $k = 1$, the symplectic quotients $M_{0,1,n}$ are all $\Z^+$-graded regularly isomorphic
to $M_{0,1,1}$ by Corollary \ref{cor:IsoCases}, which is the orbifold $\C/\Z_2$. Note that
these isomorphisms have been observed in \cite{BosGotayReduc,ArmsGotayJennings}; see Section
\ref{sec:Computations}.

Now assume $k, n \geq 2$, and consider $M_{0,k,n}^{\SO}$. When $n \leq k$, the corresponding
$\SO_n(\C)$-representation is $2$-large by Theorem \ref{thrm:LargeCases}.
Moreover, this representation is orthogonal with respect to the Euclidean inner product on $\C^n$
and hence is stable by \cite[Lemma 7.11]{GWSlifting}. Therefore, by
\cite[Theorem 1.1]{HerbigSchwarzSeaton}, the symplectic quotient is not symplectomorphic
(regularly or otherwise) to a linear symplectic orbifold. Similarly, when $n \leq k-1$,
the $\OO_n(\C)$ representation is $2$-large by Theorem \ref{thrm:LargeCases} and again stable by
\cite[Lemma 7.11]{GWSlifting} so that $M_{0,k,n}$ is not regularly symplectomorphic to
a linear symplectic orbifold by \cite[Theorem 1.3]{HerbigSchwarzSeaton}.

It remains only to consider the cases $M_{0,k,n}$ for $n \geq k$ and $M_{0,k,n}^{\SO}$
for $n \geq k+1$, which for fixed $k$ are all $\Z^+$-graded regularly symplectomorphic by
Corollary \ref{cor:IsoCases} and Theorem \ref{thrm:SOnIso}. Hence, it is sufficient to
show that $M_{0,k,k}$ is not $\Z^+$-graded regularly symplectomorphic to a linear symplectic
orbifold for each $k \geq 2$.

For the case $k=2$, the Hilbert series of $\R[M_{0,2,2}]$ is computed in Section
\ref{sec:Computations}, Equation \eqref{eq:k2n2Laurent}. If $M_{0,2,2}$ is $\Z^+$-graded
regularly symplectomorphic to the linear symplectic orbifold $U/H$, then the Hilbert
series of $\R[M_{0,2,2}]$ clearly coincides with that of $\R[U]^H$, and hence the
first coefficient $\gamma_0$ of the Laurent expansion of the Hilbert series at $t = 1$
is $1/|H|$; see \cite[Theorem 3.23]{PopovVinberg}. Because the first coefficient
of the Laurent expansion is not the reciprocal of an integer, we have a contradiction,
and $M_{0,2,2}$ is not $\Z^+$-graded regularly symplectomorphic to a linear orbifold;
see \cite[Section 7]{HerbigSeatonHSeries}.

Now, for some $k \geq 3$, assume for contradiction that there is $\Z^+$-graded regular
symplectomorphism $\chi\co M_{0,k,k} \to U/H$ with $H$ a finite group and $U$ a unitary
$H$-module; this implies that $\chi^\ast\co\R[U]^H\to\R[M_{0,k,k}]$ is a $\Z^+$-graded
Poisson isomorphism. We will demonstrate that this implies a $\Z^+$-graded regular symplectomorphism
between $M_{0,k,k-1}$ and a linear symplectic suborbifold of $U/H$, which contradicts
\cite[Theorem 1.3]{HerbigSchwarzSeaton} as explained above.

By Lemma \ref{lem:Codim2Strat}, $M_{0,k,k}$ has a stratum $S$ whose closure
$S^{cl}$ is homeomorphic $M_{0,k,k-1}$. As the Poisson algebra of smooth functions
determines the connected components of the strata of a symplectic quotient by
\cite[Proposition 3.3]{SjamaarLerman}, $U$ must also have a real codimension $2$ orbit
type stratum $T$ with $\chi(S) = T$, and hence $\chi(S^{cl}) = T^{cl}$. Say
$T$ is the orbit type corresponding to $K \leq H$, let $L = N_H(K)/K$, and then
$T^{cl}$ is homeomorphic to $U^K/L$.

By \cite[Theorem 2]{HerbigSeatonImpos}, we have that $\chi$ restricts to a
$\Z^+$-graded regular
symplectomorphism $\chi|_{S^{cl}}\co S^{cl} \to T^{cl}$, where the global charts
for these spaces are the restrictions of those for $M_{0,k,k}$ and $U/H$,
respectively.  By Lemma \ref{lem:Codim2Strat}, the map
$\R[M_{0,k,k}]\to\R[M_{0,k,k-1}]$ is surjective, so the restricted global chart on
$S^{cl}$ is isomorphic to the usual global chart of $\R[M_{0,k,k-1}]$ given by the
$\OO_{k-1}$-invariants. Hence, we have a Poisson isomorphism
$\chi|_{S^{cl}}^\ast$ of $\R[M_{0,k,k-1}]$ into a subalgebra $\mathcal{A}$ of $\R[U^K]^L$
corresponding to the homeomorphism
$M_{0,k,k-1} \simeq S^{cl} \simeq T^{cl} \simeq U^K/L$; in particular,
$\mathcal{A}$ separates points in $U^K/L$. To show that this is in fact a
$\Z^+$-graded regular symplectomorphism onto $U^K/L$ with global chart induced
from $\R[U^K]^L$, it is sufficient to show that $\chi|_{S^{cl}}^\ast$ is surjective.
The proof follows \cite[Theorem 4]{HerbigSeatonImpos}; we summarize the argument here
but refer the reader to that reference for more details.

Tensoring with $\C$ yields isomorphisms
\[
    (\chi^\ast)^{\C}\co \C[V_{k,k}\times V_{k,k}^\ast]^{\OO_k(\C)}/(\mathcal{J}_{k,k}^{\C})^{\OO_k(\C)}
        \to\C[U\times U^\ast]^H
\]
and
\[
    (\chi|_{S^{cl}}^\ast)^\C\co \C[V_{k,k-1}\times V_{k,k-1}^\ast]^{\OO_{k-1}(\C)}/
        (\mathcal{J}_{k,k-1}^{\C})^{\OO_{k-1}(\C)}\to\mathcal{A}^\C,
\]
where we recall that $V_{k,k}^\ast$ denotes the dual representation and $\mathcal{J}_{k,k}^{\C}$,
$\mathcal{A}^{\C}$, etc. the complexifications. As the variety associated to $\mathcal{J}_{k,k-1}^{\C}$ is
normal by Theorem \ref{thrm:LargeCases}, the variety in
$\C[V_{k,k-1}\times V_{k,k-1}^\ast]^{\OO_{k-1}(\C)}$ associated to $(\mathcal{J}_{k,k-1}^{\C})^{\OO_{k-1}(\C)}$
is as well normal by \cite[Theorem 3.16]{PopovVinberg}, hence $\mathcal{A}^\C$ is integrally closed. Moreover,
it can be seen as in the proof of \cite[Theorem 4]{HerbigSeatonImpos} that $\mathcal{A}^{\C}$ is a
separating algebra in the sense of \cite[Definition 2.3.8]{DerskenKemperBook}. Specifically,
the map $(U\times U^\ast)^K/L \to (U\times U^\ast)/H$ induced by the embedding
$(U\times U^\ast)^K\to U\times U^\ast$ is injective as it is injective on the open dense
set of points with isotropy group $K$ and hence a birational map between normal varieties.
Then as $\C[U\times U^\ast]^H$ separates orbits in $(U\times U^\ast)/H$, as the map
$(\chi^\ast)^{\C}$ is an isomorphism, and as $\mathcal{A}$ separates points in $U^K/L$,
it follows that $\mathcal{A}^\C$ is a separating algebra. But then by
\cite[Theorem 2.3.12]{DerskenKemperBook}, $\C[(U\times U^\ast)^K]^L$ is equal to the normalization
of $\mathcal{A}^{\C}$, so that as $\mathcal{A}^{\C}$ is integrally closed,
$\C[(U\times U^\ast)^K]^L = \mathcal{A}^{\C}$. Hence $(\chi|_{S^{cl}}^\ast)^\C$ is surjective so
that $\chi|_{S^{cl}}^\ast$ is surjective, completing the proof.
\end{proof}


\section{Rational singularities}
\label{sec:RatSing}

In this section, we prove the following.

\begin{theorem}
\label{thrm:RationalSingMoment}
Suppose $1 \leq n \leq k$. Then the real variety given by the Zariski closure of $Z_{k,n}$
has rational singularities.
\end{theorem}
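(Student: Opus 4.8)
The plan is to identify $Z_{k,n}$ with a determinantal-type variety whose rational singularities are already known, and then transfer that information across the complexification. Recall from Theorem \ref{thrm:LargeCases} that when $n \leq k$ the ideal $\mathcal{J}^{\C}$ generated by the components of the complexified moment map is a reduced, irreducible complete intersection, and the real ideal $\mathcal{J}$ is real radical; hence the Zariski closure of $Z_{k,n}$ is the real form of the complex variety $\operatorname{Spec}(\C[V\otimes_\R\C]/\mathcal{J}^{\C})$. Since rational singularities is a property that can be checked after the faithfully flat base change $\R \to \C$ (it is stable under field extension for excellent rings, cf. the standard references on rational singularities), it suffices to prove that the \emph{complex} variety $\operatorname{Spec}(\C[V\otimes_\R\C]/\mathcal{J}^{\C})$ has rational singularities.

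**First** I would give an explicit model for this complex variety. Writing the $2k$ vectors as an $n \times 2k$ matrix $Y$ over $\C$ and letting $\Omega$ be the standard skew form pairing $\by_{2\ell-1}$ with $\by_{2\ell}$, the vanishing of $\bJ^{\C}$ says precisely that $Y\,\Omega\, Y^{T} = 0$ as an $n \times n$ skew-symmetric matrix; equivalently, the columns of $Y$ span a subspace of $\C^n$ that is isotropic for the symplectic form $\Omega$ on $\C^{2k}$ — i.e. $Z_{k,n}^{\C}$ is (the affine cone over, together with the relevant bundle structure) the variety of $n \times 2k$ matrices whose rows lie in a fixed isotropic-Grassmannian incidence variety. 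Concretely, $Z_{k,n}^{\C}$ is the image of the moment map description of $T^*(\C^n)^{\oplus\, ?}$; more usefully, it is well known (and is exactly the content invoked in the Introduction via \cite{TerpereauThesis} and \cite{BeauvilleSympSing}) that $\C^{kn}\git \OO_n(\C)$, the affine GIT quotient to which $Z^{\C}$ surjects, is a symplectic variety and hence has rational (even symplectic) singularities; but here I want the \emph{total space} $Z^{\C}$, not the quotient. For that, the cleanest route is to recognize $Z^{\C}$ as a variety of complexes: the condition $Y\Omega Y^T=0$ with $\Omega$ invertible makes $(Y^T, \Omega Y^T)$ or a suitable reshuffling into a two-step complex $\C^k \xrightarrow{B} \C^n \xrightarrow{A} \C^k$ with $AB = 0$ of bounded ranks, and varieties of complexes are classically known to be normal, Cohen–Macaulay, with rational singularities (Kempf, De Concini–Strickland, Musili–Seshadri).

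**The key steps**, in order: (1) write down the isomorphism of $Z_{k,n}^{\C}$ with an appropriate variety of complexes (or a determinantal/nilpotent-orbit-closure type variety) — here one must be careful about the rank bound "$\dim W_{\by} \leq k$" noted after Theorem \ref{thrm:IdealRelations}, which pins down exactly which rank stratum arises; (2) invoke the known fact that such a variety has rational singularities over $\C$; (3) use irreducibility and reducedness of $\mathcal{J}^{\C}$ (Theorem \ref{thrm:LargeCases}(i)) to conclude that $\operatorname{Spec}(\C[V\otimes_\R\C]/\mathcal{J}^{\C})$ \emph{is} this variety, not merely birational to it; (4) descend along $\C/\R$: since $\mathcal{J}$ is real radical, the Zariski closure of $Z_{k,n}$ in $\R[V]$ has coordinate ring $\R[V]/\mathcal{J}$ with $(\R[V]/\mathcal{J})\otimes_\R\C = \C[V\otimes_\R\C]/\mathcal{J}^{\C}$, and rational singularities is insensitive to this base field extension.

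**The main obstacle** I expect is step (1): producing a clean, correct identification of $Z_{k,n}^{\C}$ with a variety whose rational singularities are genuinely on the shelf, and in particular getting the rank conditions and the symplectic form $\Omega$ bookkeeping exactly right so that no spurious components or wrong rank strata creep in. An alternative to sidestep the variety-of-complexes identification would be to build an explicit desingularization: map the isotropic-subspace flag variety $\{(W,Y) : \operatorname{colspace}(Y)\subseteq W,\ W \text{ isotropic of dim } \min(k,n)\} \to Z^{\C}$ and check directly that the higher direct images of the structure sheaf vanish (using that the fiber over a generic point is a point and that the exceptional fibers are towers of projective spaces / partial flag varieties, whose cohomology is computed by Bott vanishing or by a Grauert–Riemenschneider argument). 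Either way, once the complex variety is pinned down the descent to $\R$ is routine.
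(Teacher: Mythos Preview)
Your approach is genuinely different from the paper's. The paper argues by induction on $n$ via the Flenner--Watanabe criterion: since $\R[V_{k,n}]/\mathcal{J}_{k,n}$ is a graded normal Cohen--Macaulay ring with negative $a$-invariant (being a complete intersection by Theorem~\ref{thrm:LargeCases}), it suffices to verify rational singularities away from the irrelevant ideal. For that the paper uses the $\OO_n$- and particle-permutation symmetries to reduce to the chart where every $q_{1,\alpha}\neq 0$, and then an explicit elimination of the variables $p_{1,\beta}$ ($\beta\geq 2$) shows that on this chart the variety is locally isomorphic to $\R[V_{k-1,n-1}]/\mathcal{J}_{k-1,n-1}$ times affine space, which has rational singularities by the inductive hypothesis. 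The base cases $n=1$ (trivial) and $n=2$ (an isolated hypersurface singularity) start the induction.

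Your strategy of identifying $Z_{k,n}^{\C}$ globally with a variety whose rational singularities are already on the shelf is attractive, but there is a real gap at exactly the point you flagged. The condition $Y\Omega Y^{T}=0$ does \emph{not} produce a variety of complexes in the sense of Kempf or De~Concini--Strickland: in a variety of complexes the maps $A$ and $B$ vary independently subject only to $AB=0$ and rank constraints, whereas any two-step complex you manufacture from $Y$ has $B$ determined by $A$ (essentially $B=\Omega A^{T}$), so those references do not apply as stated. Your fallback---resolve by $\{(Y,L):\text{rowspace}(Y)\subseteq L\}$ over the isotropic Grassmannian of $n$-planes in $(\C^{2k},\Omega)$---is the correct repair: this is a Kempf collapsing of the homogeneous bundle $\mathcal{S}^{\oplus n}$ over $\mathrm{Sp}_{2k}/P$, and rational singularities follows from Kempf's collapsing theorem once one checks birationality (a generic $Y\in Z^{\C}$ has rank $n$, forcing $L$). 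That argument must actually be executed rather than gestured at; once it is, you get a one-shot proof together with an explicit resolution, whereas the paper's route is more elementary and self-contained but works chart by chart.
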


Note that the cases $1 \leq n \leq k$ are exactly those such that $Z_{k,n}$ is normal
by Theorem \ref{thrm:LargeCases}.

\begin{proof}
As $n \leq k$, the scheme $\mathcal{J}_\C$ is a normal, reduced, irreducible complete
intersection by Theorem \ref{thrm:LargeCases}({\it i}). Counting invariants and relations,
the Hilbert Series is given by
\[
    \Hilb(x)    =   \frac{ (1 - x^2)^{n\choose 2} }{ (1 - x)^{2kn} },
\]
so the $a$-invariant is $n(n - 2k - 1)$. Moreover, the (real) vanishing ideal of $Z_{k,n}$ is equal
to $\mathcal{J}_{k,n}$, again by Theorem \ref{thrm:LargeCases}({\it i}).

When $n = 1$, the ideal $\mathcal{J}_{1,k} = 0$ and the result is trivial.
Suppose $n = 2$. Then $\mathcal{J}_{2,k}$ is generated by the single element
\[
    J_{1,2} = \sum\limits_{\ell=1}^k q_{\ell,1} p_{\ell,2} - q_{\ell,2} p_{\ell,1}.
\]
By the theorem of Flenner \cite{FlennerRational} and Watanabe \cite{WatanabeRational}
(see also \cite[Theorem 9.2]{HunekeTightClosure}), as $\R[V_{k,n}]/\mathcal{J}_{k,n}$ is
Cohen-Macaulay and normal with negative $a$-invariant, it is sufficient to show that
$(\R[V_{k,n}]/\mathcal{J}_{k,n})_P$ is rational for all primes $P$ not equal to the
homogenous maximal ideal. However, by a simple computation, adjoining an inverse to any of the variables
yields a localization of a polynomial ring. Therefore, the origin is an isolated
singularity, and $\R[V_{k,n}]/\mathcal{J}_{k,n}$ has rational singularities. Note that
this establishes the claim for $k \leq 2$.

Now, fix $n$, and assume for induction that $\R[V_{m,k}]/\mathcal{J}_{m,k}$ has
rational singularities for any $k \geq 3$ and $2\leq m < n \leq k$. We fix a value of
$k \geq 3$ and prove that $\R[V_{k,n}]/\mathcal{J}_{k,n}$ has rational singularities.

Note that $\mathcal{J}_{k,n}$ is invariant under any isomorphism given by a permutation of
the pairs $(\bq_\ell, \bp_\ell)$, the permutation $\bq_\ell \leftrightarrow \bp_\ell$, or
induced by an element of $\OO_n$.
Any nonzero point in $V_{k,n}$ can be mapped by these actions to a point such that $\bq_1\neq 0$.
Similarly, any point with $\bq_1\neq 0$ is in the $\OO_n$-orbit of a point such that
$q_{1,\alpha} \neq 0$ for $\alpha = 1,\ldots,n$. Hence, it is sufficient to restrict to
neighborhoods of such a point. Localizing near a point with $q_{1,\alpha} \neq 0$ for
$\alpha = 1,\ldots,n$, we adjoin inverses to $q_{1,\alpha}$ for each $\alpha$. Then for
$\beta = 2,\ldots,n$, the $J_{1,\beta}$ can be
expressed as
\begin{equation}
\label{eq:EliminatedVars}
    p_{1,\beta}     =       q_{1,1}^{-1} \left(q_{1,\beta} p_{1,1}
                        + \sum\limits_{\ell=2}^{k} q_{\ell,\beta} p_{\ell,1} - q_{\ell,1} p_{\ell,\beta}\right),
                \quad\quad \beta = 2, \ldots, n,
\end{equation}
eliminating the variables $p_{1,\beta}$ for $\beta \neq 1$.
Then each $J_{\alpha,\beta}$ with $2 \leq \alpha < \beta$ becomes
\[
\begin{split}
    J_{\alpha,\beta} =&  q_{1,\alpha} p_{1,\beta} - q_{1,\beta} p_{1,\alpha}
                + \sum\limits_{\ell=2}^{k} q_{\ell,\alpha} p_{\ell,\beta} - q_{\ell,\beta} p_{\ell,\alpha}
    \\=&    q_{1,\alpha} \left(q_{1,\beta} \frac{p_{1,1}}{q_{1,1}}
                + \sum\limits_{\ell=2}^{k} q_{\ell,\beta}
                   \frac{p_{\ell,1}}{q_{1,1}} - \frac{q_{\ell,1}}{q_{1,1}} p_{\ell,\beta}\right)
    \\&
                - q_{1,\beta} \left(q_{1,\alpha} \frac{p_{1,1}}{q_{1,1}}
                + \sum\limits_{\ell=2}^{k} q_{\ell,\alpha}
                    \frac{p_{\ell,1}}{q_{1,1}} - \frac{q_{\ell,1}}{q_{1,1}} p_{\ell,\alpha}\right)
                + \sum\limits_{\ell=2}^{k} q_{\ell,\alpha} p_{\ell,\beta} - q_{\ell,\beta} p_{\ell,\alpha} .
\end{split}
\]
Therefore, we can express
\begin{equation}
\begin{split}
\label{eq:RationalRelations}
    &           \frac{J_{\alpha,\beta}}{q_{1,\alpha}q_{1,\beta}}
    \\
    =&           \sum\limits_{\ell=2}^{k}
                    \frac{q_{\ell,\beta}}{q_{1,\beta}} \frac{p_{\ell,1}}{q_{1,1}}
                        - \frac{q_{\ell,1}}{q_{1,1}} \frac{p_{\ell,\beta}}{q_{1,\beta}}
                    +
                    \frac{q_{\ell,1}}{q_{1,1}} \frac{p_{\ell,\alpha}}{q_{1,\alpha}}
                        - \frac{q_{\ell,\alpha}}{q_{1,\alpha}} \frac{p_{\ell,1}}{q_{1,1}}
                    +
                    \frac{q_{\ell,\alpha}}{q_{1,\alpha}} \frac{p_{\ell,\beta}}{q_{1,\beta}}
                        - \frac{q_{\ell,\beta}}{q_{1,\beta}} \frac{p_{\ell,\alpha}}{q_{1,\alpha}}
    \\
    =&          \sum\limits_{\ell=2}^{k}
                    \left( \frac{q_{\ell,\alpha}}{q_{1,\alpha}} - \frac{q_{\ell,1}}{q_{1,1}} \right)
                    \left( \frac{p_{\ell,\beta}}{q_{1,\beta}} - \frac{p_{\ell,1}}{q_{1,1}} \right)
                    -
                    \left( \frac{q_{\ell,\beta}}{q_{1,\beta}} - \frac{q_{\ell,1}}{q_{1,1}}\right)
                    \left(\frac{p_{\ell,\alpha}}{q_{1,\alpha}} - \frac{p_{\ell,1}}{q_{1,1}} \right).
\end{split}
\end{equation}

Now, consider the algebra
\[
    \R[\bx, \by] :=
    \R[x_{\ell,\beta}, y_{\ell,\beta} \mid \ell = 2,\ldots,k; \quad \beta = 2,\ldots, n]
\]
and ideal $\mathcal{J}^\prime$ generated by
\begin{equation}
\label{eq:DefJPrime}
    J_{\alpha,\beta}^\prime
    =   \sum\limits_{\ell=2}^k x_{\ell,\alpha} y_{\ell,\beta} - x_{\ell,\beta} y_{\ell,\alpha}
\end{equation}
for $2 \leq \alpha < \beta \leq n$. Then $\R[\bx,\by]/\mathcal{J}^\prime$
is obviously isomorphic to $\R[V_{n-1,k-1}]/\mathcal{J}_{n-1,k-1}$ by simply relabeling
variables. Hence, by the inductive hypothesis,
$\R[\bx,\by]/\mathcal{J}^\prime$ has rational singularities and in
particular is normal.
We adjoin the additional variables $z_{1,\alpha}$ for $\alpha = 1,\ldots,n$; $z_{\ell,1}$ for
$\ell = 2,\ldots,k$; and $w_{\ell,1}$ for $\ell = 1,\ldots,k$. Let
\[
\begin{split}
    W   =&      \R^{2(k-1)(n-1)}\oplus\R^{n+k-1}\oplus \R^{k}
        \\=&    \{(\bx, \by, \bz, \bw) |
                \bx,\by \in \R^{2(k-1)(n-1)},
                \bz\in \R^{n+k-1}, \bw\in \R^{k}\}
\end{split}
\]
be the vector space with these coordinates, let $Y$ denote the affine variety in $W$
with ideal $\mathcal{J}^\prime$, and then $Y$ is a normal variety with
rational singularities.

Let $\mathcal{O}$ be the open subset of $V_{k,n}$ defined by $q_{1,\alpha}\neq 0$ for
$\alpha = 1,\ldots,n$, let $\mathcal{U}$ be the open subset of $W$ defined by $z_{1,\alpha}\neq 0$ for
$\alpha = 1,\ldots,n$, and then it is easy to see that
of $\mathcal{O}\cap Z$ and $\mathcal{U}\cap Y$ are birationally equivalent.
As $\mathcal{U}\cap Y$ is a localization of the normal variety $Y$ and hence a normal variety,
it follows from Zariski's main theorem that  $\mathcal{O}\cap Z$ and $\mathcal{U}\cap Y$ are
isomorphic. As $Y$ and hence $\mathcal{U}\cap Y$ has
rational singularities, it follows that $\mathcal{O}\cap Z$ has rational singularities.

Again, $\R[V_{k,n}]/\mathcal{J}_{k,n}$ is Cohen-Macaulay, normal, and has negative
$a$-invariant. We again apply the theorem of Flenner \cite{FlennerRational} and Watanabe
\cite{WatanabeRational} to conclude that $\R[V_{k,n}]/\mathcal{J}_{k,n}$ has rational
singularities near the origin. Then by induction, we are done.
\end{proof}

By Theorem \ref{thrm:RationalSingMoment}, when $n \leq k$, the spectrum of the
ring $\R[V_{k,n}]/\mathcal{I}_Z$ has rational singularities. It then follows from
Boutot's Theorem \cite{Boutot} that the spectrum of $\R[M_0]$ and $\R[M_0^{\SO}]$ have
only rational singularities as well. If $n > k$, then $\R[M_{0,k,n}]$ is isomorphic to
$\R[M_{0,k,k}]$ by Corollary \ref{cor:IsoCases}, and $\R[M_{0,k,n}^{\SO}]$ is isomorphic to
$\R[M_{0,k,k}]$ by Theorem \ref{thrm:SOnIso}. Hence we have the following.

\begin{corollary}
\label{cor:RationalSing}
For each $k, n\geq 1$, the (Zariski closure of the) symplectic quotients $M_0$ and $M_0^{\SO}$
have rational singularities. In particular, they are normal, and the rings
$\R[M_0]$ and $\R[M_0^{\SO}]$ of regular functions are Cohen-Macaulay.
\end{corollary}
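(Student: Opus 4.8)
The plan is to split the argument according to whether $n \le k$ or $n > k$, to reduce the case $n>k$ to the case $n=k$ via the isomorphisms already established, and for $n \le k$ to combine Theorem \ref{thrm:RationalSingMoment} with Boutot's theorem on invariants of reductive groups.

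For $n \le k$, I would first record that, since $\OO_n$ and $\SO_n$ are reductive and we are in characteristic zero, the functor of invariants is exact; as $\mathcal{I}_Z$ is $\OO_n$-stable this yields
\[
    \R[M_0] = \R[V_{k,n}]^{\OO_n}/\mathcal{I}_Z^{\OO_n} = \bigl(\R[V_{k,n}]/\mathcal{I}_Z\bigr)^{\OO_n},
\]
and likewise $\R[M_0^{\SO}] = (\R[V_{k,n}]/\mathcal{I}_Z)^{\SO_n}$; recall moreover that $\mathcal{I}_Z = \mathcal{J}_{k,n}$ in this range by Theorem \ref{thrm:LargeCases}(\emph{i}). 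By Theorem \ref{thrm:RationalSingMoment} the ring $\R[V_{k,n}]/\mathcal{I}_Z$ has rational singularities, so Boutot's theorem \cite{Boutot} — the ring of invariants of a linearly reductive group acting on a finitely generated algebra with rational singularities again has rational singularities — gives the claim for $\R[M_0]$ and $\R[M_0^{\SO}]$, once one complexifies to bring everything into the setting required by Boutot. This complexification is harmless: the formation of invariants is compatible with the extension $\R \subset \C$ (see Section \ref{sec:Setup}), as is the property of having rational singularities.

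For $n > k$, I would observe that a $\Z^+$-graded regular symplectomorphism induces in particular a graded ring isomorphism, so Corollary \ref{cor:IsoCases} gives $\R[M_{0,k,n}] \cong \R[M_{0,k,k}]$, and Theorem \ref{thrm:SOnIso} (applicable since $n \ge k+1$) gives $\R[M_{0,k,n}^{\SO}] = \R[M_{0,k,n}] \cong \R[M_{0,k,k}]$; the case $n = k$ treated above shows these have rational singularities, and this property is invariant under isomorphism. Combining, $M_0$ and $M_0^{\SO}$ have rational singularities for all $k, n \ge 1$. Finally, since a variety with rational singularities in characteristic zero is normal and Cohen-Macaulay, the remaining assertions follow at once.

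The hard work is not located here: the substantive input is Theorem \ref{thrm:RationalSingMoment}, and the present statement is largely bookkeeping. The only steps requiring attention are the identification $\R[M_0] = (\R[V_{k,n}]/\mathcal{I}_Z)^{\OO_n}$, which relies on exactness of invariants in characteristic zero, and the verification that Boutot's theorem may be applied to the real algebras, which I would dispatch by passing to the complexification $\C[V_{k,n}\otimes_\R\C]/\mathcal{J}_{k,n}^\C$ and the reductive groups $\OO_n(\C)$, $\SO_n(\C)$. I do not expect a genuine obstacle.
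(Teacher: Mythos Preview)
Your proposal is correct and follows essentially the same route as the paper: split into $n\le k$ (apply Theorem~\ref{thrm:RationalSingMoment} and Boutot's theorem) and $n>k$ (reduce to $n=k$ via Corollary~\ref{cor:IsoCases} and Theorem~\ref{thrm:SOnIso}). The paper's argument is terser and omits the justifications you supply concerning exactness of invariants and the passage to the complexification, but the logical skeleton is identical.
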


Note that Corollary \ref{cor:RationalSing} is also a consequence of the results of
\cite{TerpereauThesis} and \cite{BeauvilleSympSing}. In particular, by
\cite[Propisiton A.2.1 and Lemma A.2.2]{TerpereauThesis}, the (complex) symplectic quotients
$(\bJ^\C)^{-1}(0)\git\OO_n(\C)$ and $(\bJ^\C)^{-1}(0)\git\SO_n(\C)$
are so-called symplectic varieties. Then by \cite[Proposition 1.3]{BeauvilleSympSing},
symplectic varieties are Gorenstein and have rational singularities, properties
that are easily seen to descend from the complexifications to the real quotients.
Note that it does not follow that the symplectic quotients $M_0$ and $M_0^{\SO}$
are \emph{graded} Gorenstein, which appears to have been established
only for (complex) symplectic quotients by tori and those that admit a symplectic resolution;
see \cite[Sections 3.2 and 3.4]{McGertyNevins}. In particular, the condition
\cite[Assumption 3.2 (2a)]{McGertyNevins} corresponds to the symplectic quotient
being graded Gorenstein. We consider this condition in the following section.


\section{Explicit computations}
\label{sec:Computations}

For small values of $k$, we have computed the on-shell relations that can be used
to describe $\R[M_0]$ and $\R[M_0^{\SO}]$ as an affine algebra explicitly using elimination.
These computations have motivated many of the findings in this paper and have allowed us to
verify that for small $k$, the ideal described by Theorem \ref{thrm:IdealRelations} is in fact
real radical, i.e. there are no other on-shell relations. We have also used these
computations to test whether the algebra $\R[M_0]$ is graded Gorenstein, which
we conjecture is true in general.

The method is as follows; see \cite[Chapters 2 and 3]{CoxLittleOsheaIVA} for background.
Let $\R[\bq,\bp,\bY]$ be the polynomial ring in
the variables $q_{\ell,\alpha}$, $p_{\ell,\alpha}$, and $Y_{i,j}$ where $1\leq \ell \leq k$,
$1\leq \alpha \leq n$, and $1\leq i \leq j \leq 2k$; the $q_{\ell,\alpha}$ and $p_{\ell,\alpha}$
have degree $1$ and the $Y_{i,j}$ have degree $2$. Using the software package \emph{Mathematica}
\cite{Mathematica} and a monomial order that is an elimination order for the variables
$q_{\ell,\alpha}$ and $p_{\ell,\alpha}$, we compute a Gr\"{o}bner basis for the ideal of $\R[\bq,\bp,\bY]$
generated both by the components of the moment map as well as the functions $Y_{i,j} - x_{i,j}$ where the
$x_{i,j}$ are the invariant scalar products described in Section \ref{sec:Setup}.
Computing the elimination ideal corresponding to eliminating the variables
$q_{\ell,\alpha}$ and $p_{\ell,\alpha}$ yields the ideal $\mathcal{R} = \mathcal{R}_{k,n}$
of relations in the intersection of the algebra
$\R[V_{k,n}]^{\OO_n} = \R[x_{i,j}| 1\leq i \leq j \leq 2k] =: \R[X]$
and the ideal $\mathcal{J} = \mathcal{J}_{k,n}$ generated by the moment map.
When $n \leq k+1$, the ideal $\mathcal{J}_{k,n}$ is real radical by Theorem \ref{thrm:LargeCases}
so that $\mathcal{R}_{k,n}$ is equal to the ideal of $\R[M_0]$ in $\R[X]$.
The computations to compute the ideal $\mathcal{R}_{k,n}^{\SO}$ of $\R[M_{0,k,n}^{\SO}]$ are similar;
the only difference is that we include the invariant determinants along with the $x_{i,j}$ and additional
corresponding variables in $\bY$.

Let $\mathcal{Q}=\mathcal{Q}_{k,n}$ denote the ideal described in Theorem
\ref{thrm:IdealRelations}, i.e. $\mathcal{Q}_{k,n}$ is generated by the $Q_{i,j}$ when $k\geq n$
and the $Q_{i,j}$ along with the $(n+1)\times(n+1)$-minors of $X$ when $n < k$, and
recall that by the same theorem, the ideal of relations in $\R[M_{0,k,n}]$ is the
real radical of $\mathcal{Q}_{k,n}$.
We then have that when $n \leq k+1$, as $\mathcal{R}_{k,n}$ is the ideal of relations of $\R[M_{0,k,n}]$,
$\mathcal{R}_{k,n}$ is the real radical of $\mathcal{Q}_{k,n}$. For fixed $k$, the cases $n \geq k$ yield
isomorphic $\R[M_0]$ by Corollary \ref{cor:IsoCases}, and the computation of $\mathcal{R}_{k,n}$ can be
used to verify this fact. That the ideal $\mathcal{R}_{k,n}$ is real radical in these cases follows from
this isomorphism.

Experimentally, we have found that the elimination is much faster using the monomial order
\[
    q_{1,1} \succ p_{1,1} \succ q_{1,2} \succ p_{1,2} \succ\cdots\succ q_{1,n} \succ p_{1,n}
    \succ q_{2,1}\succ p_{2,1} \succ\cdots\succ q_{k,n} \succ p_{k,n}
\]
on the elimination variables; the ordering on the $x_{i,j}$ does not appear to affect
the time of the computations significantly. As an example, on a Windows 7 desktop computer
running \emph{Mathematica} 8.0
with an i5 Core Processor, 4 GB or RAM, and processing priority set to \emph{High},
the Gr\"{o}bner basis computation for $k = 2$ and $n = 4$ using Lexicographic order took 403 seconds,
while it took less than one second with the above monomial order; similar extreme differences
were noted in other cases. For larger cases, we also used the standard \emph{Mathematica}
\texttt{EliminationOrder} with the option \texttt{Sort -> True}, which allows the algorithm to
reorder the variables.

Once the ideal of relations $\mathcal{R}$ of $\R[M_0]$ (and $\mathcal{R}^{\SO}$ of $\R[M_0^{\SO}]$)
has been determined, we compute the Hilbert series
of $\mathcal{R}$ and $\mathcal{Q}$ using \emph{Macaulay2} \cite{M2}. Note that $\mathcal{R}$ contains the
ideal $\mathcal{Q}$ by construction and the above observations. Hence, equality of these ideals can be
verified by observing that their Hilbert series coincide. For smaller cases, we have also used the
ideal membership and polynomial division features of \emph{Macaulay2} to express each element of the
Gr\"{o}bner basis for $\mathcal{R}_{k,n}$ in terms of the generators of $\mathcal{Q}_{k,n}$ explicitly,
though most of these expressions are too large to report here.

We check that each Hilbert series satisfies the functional equation
\begin{equation}
\label{eq:GradedGorenCond}
    \Hilb(t^{-1})   =   (-1)^d t^{-a}\Hilb(t)
\end{equation}
where $d$ is the Krull dimension and $a$ is the $a$-invariant. As the corresponding quotient rings are
Cohen-Macaulay integral domains by Corollary \ref{cor:RationalSing}, this functional equation is equivalent
to the rings being Gorenstein by \cite[Theorem 4.4]{StanleyHilbert}. In fact, we observe in each case
that $d = -a$, and hence that the quotient ring is graded Gorenstein; see
\cite[Section 3.7.3]{DerskenKemperBook}. When $k = 2$, these computations were considered in
\cite[Section 8.3.1]{HerbigSeatonHSeries}, and the relation between the coefficients of the Laurent
expansion of the Hilbert series at $t=1$ considered there and the graded Gorenstein condition
is explained by \cite[Corollary 1.8]{HerbigHerdenSeaton}.

Note that the cases corresponding to $n = 1$ are orbifolds in a strict sense: the moment map is $0$,
and the group $\OO_1 = \Z_2$ is finite. Hence in these cases, the fact that $\R[M_0]$ has rational singularities
is a direct consequence of Boutot's Theorem \cite{Boutot}, and the fact that $\R[M_0]$ is graded
Gorenstein follows from Watanabe's Theorem \cite{WatanabeGor1,WatanabeGor2}. We have presented these
cases for completeness, but they are particularly easy to compute. The corresponding $\SO_1$-quotients
are affine spaces as $\SO_1$ is trivial.

To summarize the experimental results discussed below, we were able to compute the ideals
$\mathcal{R}_{k,n}$ and $\mathcal{Q}_{k,n}$ for $(k,n)=(1,1), (2,1), (2,2), (3,1), (3,2), (3,3), (4,1)$,
and $(4,2)$;
for the case $(4,4)$, we
were only able to compute $\mathcal{Q}_{k,n}$. We were able to compute $\mathcal{R}_{k,n}^{\SO}$
only for the cases $(k,n) = (2,2)$ and $(3,2)$. In each case, the ring of regular functions
is graded Gorenstein, and $\mathcal{R}_{k,n} = \mathcal{Q}_{k,n}$ in each case where we were
able to compute both. We also observe some interesting patterns, e.g. the coefficients of $t^2$
in the numerator appears to be a square number when $k = n$ for both the $\OO_n$- and
$\SO_n$-quotients.

Many of these computations were completed in a few seconds on a laptop with
a 2.4 GHz two-core processor (utilizing only one core for Gr\"{o}bner basis computations)
and 4 GB RAM. When longer computation times were encountered or more sophisticated
equipment was required, it is noted below.


\subsection{$k = 1$}
When $k = 1$ and $n = 1$, the Gr\"{o}bner basis for $\mathcal{R}_{1,1}$ consists of the single
relation $x_{1,1} x_{2,2} - x_{1,2}^2$; this is equal to $Q_{1,1}$ so that
$\mathcal{R}_{1,1} = \mathcal{Q}_{1,1}$ is obvious in this case. All other values of $n$ yield
the same result by Corollary \ref{cor:IsoCases}. The Hilbert series of $\R[M_{0,1,n}]$ is given
by
\[
    \Hilb_{\R[M_{0,1,n}]}(t)    =   \frac{1 + t^2}{(1 - t^2)^2},
\]
which satisfies Equation \eqref{eq:GradedGorenCond} with $d = -a = 2$. Hence for each $n$,
the symplectic quotient
$M_{0,1,n}$ is $\Z^+$-graded regularly symplectomorphic to the orbifold $\C/\Z_2$ where $\Z_2$
acts as multiplication by $-1$, which has already been observed in
\cite{BosGotayReduc,ArmsGotayJennings}.

In this case, there are no new $\SO_n$-quotients, as the case $n=1$ is affine space, and
the cases $n \geq 2$ are all isomorphic to the corresponding $\OO_n$-quotients, described
above, by Theorem \ref{thrm:SOnIso}


\subsection{$k = 2$}
When $k = 2$ and $n = 1$, we again have $\bJ = 0$. The Gr\"{o}bner basis of
$\mathcal{R}_{2,1}$ contains $20$ elements, each easily recognized as a $2\times 2$-minor
of the Gram matrix $X$. The Hilbert series is
\[
    \Hilb_{\R[M_{0,2,1}]}(t) =\frac{1 + 6t^2 + t^4}{(1 - t^2)^4},
\]
which satisfies Equation \eqref{eq:GradedGorenCond} with $d = -a = 2$ and coincides
with the Hilbert series of the ideal $\mathcal{Q}_{2,1}$ so that $\mathcal{Q}_{2,1}$ is real
radical.

When $k = 2$ and $n = 2$, the Gr\"{o}bner basis of $\mathcal{R}_{2,2}$ contains the
$9$ elements
\[
\begin{split}
    -x_{24}^2 x_{33} + 2 x_{23} x_{24} x_{34} - x_{22} x_{34}^2 - & x_{23}^2 x_{44} +  x_{22} x_{33} x_{44}
        \\=&
        x_{24} Q_{23} - x_{23} Q_{24} + x_{22} Q_{34},
    \\
    -x_{14} x_{23} + x_{13} x_{24} - x_{34}^2 + x_{33} x_{44}
        =&
        Q_{34},
    \\
    -x_{14} x_{24} x_{33} + 2 x_{14} x_{23} x_{34} - x_{12} x_{34}^2 + x_{34}^3
        - &x_{13} x_{23} x_{44} + x_{12} x_{33} x_{44} - x_{33} x_{34} x_{44}
        \\=&
        x_{24} Q_{13} -x_{23} Q_{14} + (x_{12} - x_{34}) Q_{34},
    \\
    -x_{14} x_{22} + x_{12} x_{24} - x_{24} x_{34} + x_{23} x_{44}
        =&
        Q_{24},
    \\
    -x_{13} x_{22} + x_{12} x_{23} - x_{24} x_{33} + x_{23} x_{34}
        =&
        Q_{23},
    \\
    -x_{14}^2 x_{33} + 2 x_{13} x_{14} x_{34} - x_{11} x_{34}^2 - & x_{13}^2 x_{44} + x_{11} x_{33} x_{44}
        \\=&
        x_{14} Q_{13} - x_{13} Q_{14} + x_{11} Q_{34},
    \\
    -x_{12} x_{14} + x_{11} x_{24} - x_{14} x_{34} + x_{13} x_{44}
        =&
        Q_{14},
    \\
    -x_{12} x_{13} + x_{11} x_{23} - x_{14} x_{33} + x_{13} x_{34}
        =&
        Q_{13},
    \\
    -x_{12}^2 + x_{11} x_{22} + x_{34}^2 - x_{33} x_{44}
        =&
        Q_{12} - Q_{34}.
\end{split}
\]
Hence, $\mathcal{R}_{2,2} = \mathcal{Q}_{2,2}$. This can also be seen by computing
that for both $\mathcal{R}_{2,2}$ and $\mathcal{Q}_{2,2}$, the Hilbert series is
\[
    \Hilb_{\R[M_{0,2,2}]}(t) =\frac{1 + 4 t^2 + 4 t^4 + t^6}{(1 - t^2)^6},
\]
which satisfies Equation \eqref{eq:GradedGorenCond} with $d = -a = 6$.
In particular, the Laurent expansion of the Hilbert series at $t = 1$ begins
\begin{equation}
\label{eq:k2n2Laurent}
    \Hilb_{\R[M_{0,2,2}]}(t) = \frac{5}{32}(1-t)^{-6}
        + \frac{11}{128}(1-t)^{-4} + \frac{11}{128}(1-t)^{-3} + \cdots,
\end{equation}
which is used in the proof of Theorem \ref{thrm:OrbifoldCriteria}.
For each $n > 2$, $\R[M_{0,2,n}]$ is isomorphic to $\R[M_{0,2,2}]$ by
Corollary \ref{cor:IsoCases}.

The $\SO_n$-symplectic quotient is affine space when $n = 1$ and isomorphic to the corresponding
$\OO_n$ quotients when $n\geq 3$, so the only $\SO_n$-quotient not already described is $M_{0,2,2}^{\SO}$.
In this case, the Hilbert series is
\[
    \Hilb_{\R[M_{0,2,2}^{\SO}]}(t) =\frac{1 + 9 t^2 + 9 t^4 + t^6}{(1 - t^2)^6},
\]
which satisfies Equation \eqref{eq:GradedGorenCond} with $d = -a = 6$.


\subsection{$k = 3$}

When $k = 3$ and $n = 1$, the Gr\"{o}bner basis of $\mathcal{R}_{3,1}$ contains
$105$ elements, each easily recognized as a $2\times 2$-minor of the Gram matrix $X$.
The Hilbert series is
\[
    \Hilb_{\R[M_{0,3,1}]}(t) =\frac{1 + 15t^2 + 15t^4 + t^6}{(1 - t^2)^6},
\]
which satisfies Equation \eqref{eq:GradedGorenCond} with $d = -a = 6$ and coincides with
the Hilbert series of the ideal $\mathcal{Q}_{3,1}$, hence $\mathcal{Q}_{3,1}$ is real radical.

When $k = 3$ and $n = 2$, the Hilbert series is
\[
    \Hilb_{\R[M_{0,3,2}]}(t) =\frac{1 + 11 t^2 + 51 t^4 + 51 t^6 + 11 t^8 + t^{10}}{(1 - t^2)^{10}},
\]
which satisfies Equation \eqref{eq:GradedGorenCond} with $d = -a = 10$ and coincides with
the Hilbert series of the ideal $\mathcal{Q}_{3,2}$ so that $\mathcal{Q}_{3,2}$ is real radical.

When $k = 3$ and $n = 3$, the Hilbert series is
\[
    \Hilb_{\R[M_{0,3,3}]}(t)    =
    \frac{1 + 9 t^2 + 30 t^4 + 44 t^6 + 30 t^8 + 9 t^{10} + t^{12}}{(1 - t^2)^{12}},
\]
which satisfies Equation \eqref{eq:GradedGorenCond} with $d = -a = 12$ and coincides with
the Hilbert series of the ideal $\mathcal{Q}_{3,3}$ so that this ideal is real radical.
In this case, $\mathcal{R}_{3,3}$ took about $96$ minutes to compute on a laptop.

The $\SO_n$-symplectic quotient is affine space when $n = 1$ and isomorphic to $\OO_n$-quotients
when $n \geq 4$ by Theorem \ref{thrm:SOnIso}. When $n=2$, the computation of $\mathcal{R}_{3,2}^{\SO}$
takes about $7$ minutes on a laptop, and is generated by $267$ relations. The Hilbert series is
\[
    \Hilb_{\R[M_{0,3,2}^{\SO}]}(t) =\frac{1 + 25 t^2 + 100 t^4 + 100 t^6 + 25 t^8 + t^{10}}{(1 - t^2)^{10}},
\]
which satisfies Equation \eqref{eq:GradedGorenCond} with $d = -a = 10$.
We were not able to compute $\mathcal{R}_{3,3}^{\SO}$; using the option \texttt{Sort->True}
and elimination order on \emph{Mathematica} on a 244GB RAM machine, the computation had not completed
in 50 hours.


\subsection{$k = 4$}

When $n = 1$, the Gr\"{o}bner basis of $\mathcal{R}_{4,1}$ is computed in under $2$ minutes on a
laptop and contains $336$ elements, each easily recognized as a $2\times 2$ minor of the Gram matrix $X$.
The Hilbert series is
\[
    \Hilb_{\R[M_{0,4,1}]}(t) =\frac{1 + 28 t^2 + 70 t^4 + 28 t^6 + t^8}{(1 - t^2)^8},
\]
which satisfies Equation \eqref{eq:GradedGorenCond} with $d = -a = 8$.
This coincides with the Hilbert series of $\mathcal{Q}_{4,1}$ so that $\mathcal{R}_{4,1} = \mathcal{Q}_{4,1}$,
and $\mathcal{Q}_{4,1}$ is real radical.

When $n = 2$, the Gr\"{o}bner basis of $\mathcal{R}_{4,2}$ contains
$938$ elements. It took just under $10$ hours to compute on a system with 244GB RAM
and only completed when we used the \texttt{Sort->True} option in the \texttt{GroebnerBasis}
command along with elimination order. The Hilbert series is
\[
    \Hilb_{\R[M_{0,4,2}]}(t) = \frac{1 + 22 t^2 + 225 t^4 + 610 t^6 + 610 t^8 + 225 t^{10} + 22 t^{12} + t^{14}}
    {(1 - t^2)^{14}},
\]
which satisfies Equation \eqref{eq:GradedGorenCond} with $d = -a = 14$.
This coincides with the Hilbert series of $\mathcal{Q}_{4,2}$ so that $\mathcal{R}_{4,2} = \mathcal{Q}_{4,2}$,
and $\mathcal{Q}_{4,2}$ is real radical.

The case $n = 3$ appears to be inaccessible; attempts to compute $\mathcal{R}_{4,3}$
all ran out of memory. Using the option \texttt{Sort->True} and elimination order on
\emph{Mathematica} on a 244GB RAM machine, the computation ran out of memory after about 8 hours.

For $n = 4$, $\mathcal{R}_{4,4}$ appears to be out of reach, but we were able to compute $\mathcal{Q}_{4,4}$.
The Hilbert series of $\mathcal{Q}_{4,4}$ is
\[
\begin{split}
    \Hilb_{\mathcal{Q}_{4,4}}(t)
    =&  \Big(1 + 16 t^2 + 108 t^4 + 395 t^6 + 842 t^8 + 1080 t^{10} + 842 t^{12}
    \\&
                + 395 t^{14} + 108 t^{16} + 16 t^{18} + t^{20}\Big)
        / (1 - t^2)^{20}
\end{split}
\]
which satisfies Equation \eqref{eq:GradedGorenCond} with $d = -a = 20$.


\bibliographystyle{amsplain}
\bibliography{CHS}

\end{document}